 \newcommand{\QED}{\hfill \thicklines \framebox(6.6,6.6)[l]{}}
 \newenvironment{proof}{\noindent {\bf \sc Proof.} \rm}{\QED}
\numberwithin{equation}{section}
 \newtheorem{theorem}{Theorem}[section]
 \newtheorem{lemma}{Lemma}[section]
 \newtheorem{proposition}{Proposition}[section]
 \newtheorem{definition}{Definition}[section]
\newcommand{\eqnb}{\begin{eqnarray*}}
\newcommand{\eqne}{\end{eqnarray*}}
\def\beqlb{\begin{eqnarray}}\def\eeqlb{\end{eqnarray}}
\def\beqnn{\begin{eqnarray*}}\def\eeqnn{\end{eqnarray*}}
\title{\Large \bf Construction of New Copulas with Queueing Application}
\author{
Suman Thapa   \\
School of Mathematics and Statistics \\
\vspace*{4mm}
Carleton University, Ottawa, ON Canada K1S 5B6 \\
Yiqiang Q. Zhao\\
School of Mathematics and Statistics \\
Carleton University, Ottawa, ON Canada K1S 5B6}
\date{January, 2021}
\begin{document}
\maketitle
\begin{abstract}
In this paper, we construct a bound copula, which can reach both Frechet's lower and upper bounds for perfect positive and negative dependence cases. Since it covers a wide range of dependency and simple for computational purposes, it can be very useful. We then develop a new perturbed copula using the lower and upper bounds of Frechet copula and show that it satisfies all properties of a copula. In some cases, it is very difficult to get results such as distribution functions and the expected values in explicit form by using copulas such as Archemedes, Guassian, $t$-copula. Thus, we can use these new copulas. For both copulas, we derive the strength of measures of the dependency such as Spearman's rho, Kendall's tau, Blomqvist's beta and Gini's gamma, and the coefficients of the tail dependency. As an application, we use the bound copula to analyze the dependency between two service times to evaluate the mean waiting time and the mean service time when customers launch two replicas of each task on two parallel servers using the cancel-on-finish policy. We assume that the inter-arrival time is exponential and the service time is general.
\vspace*{5mm}\\

\noindent \textbf{Keywords:} Bound copula; Frechet's upper and lower bounds; measures of dependency; Spearman's rho; Kendall's tau; Gini's gamma; Blomqvist's beta; cancel-on-finish policy.
\medskip

\end{abstract}

\section{Introduction}
There are several methods for constructing copulas in the literature, for example, see  \cite{nelsen2007introduction}, \cite{joe2014dependence}, and \cite{durante2010copula}. Moreover, in their books \cite{nelsen2003properties} and \cite{jaworski2010copula}, the authors studied copulas, constructions and applications. Examples of methods of constructions include the inversion method, geometric method, algebraic method among others. \\
After a literature review, we realized that it is of interest to construct copulas which are simple in nature such that we can find the distribution function and the expected value of joint behavior of two dependent components in explicit form. Constructing new copulas are also necessary for varieties of applications with data of different natures.

We know that some copulas such as FGM, Ali-Mikhael-Haq, Gaussian copula have weak dependency in both (upper and lower) tails. Frank copula allows a wide range of dependency with weak dependency in both tails.  Clayton copula has a strong lower tail dependency and a weak upper tail dependency whereas Gumbel copula has a weak lower tail dependency and a strong upper tail dependency. Thus, in the existing copulas, we cannot find a copula which satisfies Frechet bounds for perfect positive and negative dependence case and a strong dependence in both tails. The bound copula constructed in this paper satisfies all required properties. It is suitable to analyze strong dependence in the tails. Since it covers a wide range of dependency, the bound copula can be applied in all types of dependency. We also derive the measures of dependency such as Spearman's rho, Kendall's tau, Blomqvist's beta and Gini's gamma and the tail dependency for the both copulas. Charpentier \cite{charpentier2003tail} derived the tail distribution and dependence measures.

Next, we present another new perturbation of bivariate copula in terms of its lower and upper Frechet bounds. In this area, Komornik, Komornikova and Kalicka published  a paper, \cite{komornik2017dependence}, studying perturbations of copulas, and  Fernandez-Sanchez and Ubeda-Flores published a paper, \cite{fernandez2019solution}, for perturbations of the product copula. Our construction is to use Frechet lower and upper bounds, which, we believe, is new and useful. Sometimes a minor perturbation of copula fits to real data better than the original copula. So, we need to change the copula accordingly by using its functions or relations. This idea has motivated us to construct this new perturbed copula and investigate measures of dependency and tail dependencies. The difference between the Frechet bounds of any copula is not a copula, but we prove that if we add the difference of Frechet bounds of any copula to this copula, it will satisfy all properties of a copula.

For application of the new constructed bound copula, we analyze the dependency between two parallel service times when two replicas of each task, we launched are on two parallel dependent servers and evaluate the mean waiting time and the mean service time. In many situations, when one server is faster (or slower), it motivates the other server to be faster (or slower) as well. This is a positive dependence between the servers. In some other situations, if one server is working at a relaxed pace, the other server has to work faster to balance the load, which is an example of a negative dependence between the servers. As the bound copula  is simple to use and feasible to analyze all types (weak to strong) of dependency, it is a favourable choice to apply. We assume that the inter-arrival time of tasks is exponential and two different service times for a task, namely, the shifted exponential, and the hypo-exponential distribution are considered.

In queueing applications, dependency between two components often exist. Patil and Naik-Nimbalkar \cite{patil2014conditional} and Turner \cite{turner1979conditional} considered the dependency between the waiting time and the queue length. Behazad and Rad \cite{behzad2018simultaneous} discussed the dependence between two waiting times when customers arrive to two different queues simultaneously. Muller \cite{muller2000waiting} discussed the dependency between inter-arrival and service times. In other related papers, Lee and Wang \cite{lee2011waiting} derived waiting time probabilities in the $M/G/1+ M$ queue. Raaijmakers, Albrecher, and Boxma \cite{raaijmakers2019single} stidied a single server queue with mixing dependencies. Kumar \cite{kumar2010probability} derived probability distributions and estimations of Ali-Mikhail-Haq copula.

The rest of the paper is organized as  follows: In Section 2, we construct two new copulas, the bound copula and a perturbed copula, and investigate measures of dependency and the tail dependencies. In Section 3, the dependence between two service times is analyzed which includes the waiting time and service time analysis in Section 3.1 and the dependence between two service times using the bound copula in Section 3.2 and Conclusions are made in Section 4.

\section{Construction of new copulas}
In this section, we present two new copulas for the literature, which are the bound copula and a perturbed copula, and derive their measures of dependency and the tail dependencies. In some cases, the existing copulas such as Archemedes (Gumbel, Clayton, Frank, Ali-Mikheil-Haq), Gaussian and $t$ copulas are not simple in computational point of view. Thus, it is comparatively easier to find the distribution functions and the expected values of joint and conditional behavior of two components in explicit form using these new copulas.

\subsection{Bound copula}
For any bivariate copula $C(u,v)$, the inequality, \cite{nelsen2007introduction}, $\max(u+v-1, 0) \leq C(u,v) \leq \min(u,v)$ holds, where $\max(u+v-1,0)$ and $\min(u,v)$ are the Frechet lower and upper bounds, respectively. For a bivariate copula, both bounds are also  copulas. The lower bound $\max(u+v-1, 0)$ corresponds to the case of perfect negative dependence whereas the upper bound $\min(u,v)$ corresponds to the case of perfect positive dependence.

We realize that many copulas from the literature such as $FGM$, Ali-Mikhael-Haq, Clayton, Gumbel, Frank, Guassian, $t$ do not reach $\min(u, v)$ for perfect positive dependence and $\max(u+v-1, 0)$ for perfect negative dependence, which show strong dependence in both lower and upper tails. So, there is a need to construct a copula, which satisfies Frechet lower and upper bounds for any strength of dependency, i.e. for $\theta \in [-1,1]$. Moreover, we do not have any copula similar to this copula in the literature.

\begin{theorem} The bivariate expression $C(u,v) = (1-\theta^2)uv + \frac{\theta}{4}\Big[(1+\theta)^2 \min(u,v)- (1-\theta)^2 \max(u+v-1, 0)\Big]$ defines a copula, which is a Frechet upper bound when $\theta = 1$ and a Frechet lower bound when $\theta= -1$. Moreover, when $\theta =0$, the resulting copula corresponds to the independent case. This copula is referred to as the bound copula.
\end{theorem}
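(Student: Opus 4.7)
The plan is to verify the three standard conditions for $C$ to be a bivariate copula: (i) the special-case identifications at $\theta \in \{-1, 0, 1\}$, (ii) the boundary conditions $C(\cdot,0) = C(0,\cdot) = 0$ and $C(u,1) = u$, $C(1,v) = v$, and (iii) the 2-increasing (rectangle) inequality on $[0,1]^2$. Items (i) and (ii) reduce to direct substitution: at $\theta = 1$ the $uv$ and $\max$ coefficients vanish and $C = \min(u,v)$; at $\theta = -1$ the $uv$ and $\min$ coefficients vanish and $C = \max(u+v-1, 0)$; at $\theta = 0$ only $uv$ remains. The boundary condition at $u=0$ is immediate, while $C(1,v) = (1-\theta^2)v + \tfrac{\theta}{4}[(1+\theta)^2 - (1-\theta)^2]v = (1-\theta^2)v + \theta^2 v = v$ after applying the key algebraic identity $(1+\theta)^2 - (1-\theta)^2 = 4\theta$.

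The substantive step is (iii). I would first regroup the formula as the affine combination
\[
C \;=\; (1-\theta^2)\,\Pi \;+\; \tfrac{\theta(1+\theta)^2}{4}\,M \;-\; \tfrac{\theta(1-\theta)^2}{4}\,W
\]
of the three fundamental copulas $\Pi(u,v) = uv$, $M(u,v) = \min(u,v)$, and $W(u,v) = \max(u+v-1, 0)$, whose coefficients sum to $1$ but are not all non-negative on $(-1,1) \setminus \{0\}$: the coefficient of $W$ is negative for $\theta > 0$ and the coefficient of $M$ is negative for $\theta < 0$. Hence the familiar shortcut that a convex combination of copulas is a copula does not apply. Reading off the Lebesgue decomposition of the induced $C$-measure — an absolutely continuous part with density $(1-\theta^2)$ on the open square, plus a one-dimensional singular part on the main diagonal $D = \{u=v\}$ with linear density $\tfrac{\theta(1+\theta)^2}{4}$, plus a one-dimensional singular part on the antidiagonal $A = \{u+v=1\}$ with linear density $-\tfrac{\theta(1-\theta)^2}{4}$ — the rectangle volume of $R = [u_1,u_2]\times[v_1,v_2]$ becomes
\[
V_C(R) \;=\; (1-\theta^2)(u_2 - u_1)(v_2 - v_1) \;+\; \tfrac{\theta(1+\theta)^2}{4}\,\ell_D(R) \;-\; \tfrac{\theta(1-\theta)^2}{4}\,\ell_A(R),
\]
where $\ell_D(R)$ and $\ell_A(R)$ denote the $u$-coordinate lengths of $R \cap D$ and $R \cap A$.

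From here I would proceed by a case analysis on which diagonals $R$ meets. If $R$ is disjoint from both diagonals, only the area term remains and is manifestly non-negative. If $R$ meets both diagonals, the two singular contributions must be combined with the area term and shown non-negative, which I expect to go through via the same identity $(1+\theta)^2 - (1-\theta)^2 = 4\theta$ that produced $C(1,v) = v$. The main obstacle — and what I see as the crux of the whole proof — is the case when $R$ meets exactly one diagonal with the ``wrong-sign'' singular coefficient active: specifically, for $\theta \in (0,1)$ and a rectangle $R$ disjoint from $D$ but intersecting $A$, the negative $W$-contribution has no offsetting positive $M$-contribution and must be fully absorbed by the area term $(1-\theta^2)(u_2 - u_1)(v_2 - v_1)$. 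My plan for this delicate case is to reduce to extremal thin rectangles of the form $[a, a+s]\times[1-a-t, 1-a]$ that straddle the antidiagonal, parametrize by $s, t, \theta$, and verify the resulting sharp inequality directly; the symmetric argument for $\theta \in (-1, 0)$ with $R$ meeting only $D$ then completes the proof.
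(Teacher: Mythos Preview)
Your decomposition $C = (1-\theta^2)\Pi + \tfrac{\theta(1+\theta)^2}{4}M - \tfrac{\theta(1-\theta)^2}{4}W$ and the rectangle-volume formula are correct, and your identification of the crux case is exactly right --- more right than you anticipate. The paper's own argument for 2-increasingness splits into four cases according to the \emph{common} signs of $u-v$ and $u+v-1$ at the corners (it assumes, e.g., $u_1\le v_1,\ u_2\le v_2,\ u_1+v_1\le 1,\ u_2+v_2\le 1$ simultaneously), which amounts to checking only rectangles lying entirely inside one of the four triangles cut out by the two diagonals. Rectangles that straddle a diagonal are never examined, so your measure-theoretic plan is already more careful than the paper's proof.

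The problem is that when you carry out your plan for the crux case --- $\theta\in(0,1)$ and $R$ meeting only the antidiagonal --- the ``sharp inequality'' you intend to verify is \emph{false}. Take $\theta=\tfrac12$ and $R=[\tfrac14-\varepsilon,\tfrac14+\varepsilon]\times[\tfrac34-\varepsilon,\tfrac34+\varepsilon]$: then $\ell_D(R)=0$, $\ell_A(R)=2\varepsilon$, and
\[
V_C(R)\;=\;(1-\theta^2)(2\varepsilon)^2\;-\;\tfrac{\theta(1-\theta)^2}{4}\cdot 2\varepsilon\;=\;3\varepsilon^2-\tfrac{\varepsilon}{16}\;<\;0\qquad\text{for every }0<\varepsilon<\tfrac{1}{48}.
\]
(Direct evaluation of $C$ at the four corners with $\varepsilon=0.01$ gives $V_C(R)=-0.000325$.) Symmetrically, for $\theta\in(-1,0)$ a thin square on the main diagonal fails. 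More generally, an affine combination $a\Pi+bM+cW$ with $a+b+c=1$ is $2$-increasing if and only if $a,b,c\ge 0$, since a negative singular coefficient on a diagonal can never be absorbed by the bounded density $a$ on arbitrarily thin straddling rectangles. Here $b=\tfrac{\theta(1+\theta)^2}{4}$ and $c=-\tfrac{\theta(1-\theta)^2}{4}$ have opposite signs for every $\theta\in(-1,1)\setminus\{0\}$, so the stated expression is a copula only at $\theta\in\{-1,0,1\}$. The theorem as written is incorrect, and the gap in the paper's proof is precisely the omission of the case you flagged as delicate.
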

\begin{proof}
A proof is given in appendix.
\end{proof}

\subsubsection{Dependence properties of the bound copula}
In this section, we derive the strength of dependency of the copula such as Spearman's rho, Kendall's tau, Blomqvist's beta, and Gini's gamma.

\begin{theorem} The measures of dependency: Spearman's rho, Blomqvist's beta and the Gini's gamma of copula $C(u,v) = (1- \theta^2)uv +\frac{\theta}{4}\Big[(1+\theta)^2 \min(u,v)- (1-\theta)^2 \max(u+v-1,0)\Big]$ are $ \dfrac{\theta(\theta^2 + 1)}{2}$ and the Kendall's tau is $ \dfrac{\theta(8 -3\theta + 12\theta^2 + 6\theta^3 + 4\theta^4 - 3\theta^5)}{24}$.	
\end{theorem}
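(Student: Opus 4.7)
The plan is to apply the classical integral or point-evaluation representations of the four dependence measures to the explicit form of $C$, exploiting its linearity in the three building blocks $M(u,v)=\min(u,v)$, $\Pi(u,v)=uv$, and $W(u,v)=\max(u+v-1,0)$. I would write
\[
C = a M + b \Pi + c W, \quad a = \tfrac{1}{4}\theta(1+\theta)^2, \quad b = 1-\theta^2, \quad c = -\tfrac{1}{4}\theta(1-\theta)^2,
\]
noting $a+b+c=1$; each dependence measure then reduces to at most nine elementary integrals or point values involving only these three pieces.

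First I would dispatch Blomqvist's beta, $\beta = 4 C(1/2,1/2) - 1$. Since $\max(1/2+1/2-1,0) = 0$, the $W$-contribution dies at $(1/2,1/2)$, and direct substitution gives $C(1/2,1/2) = (1-\theta^2)/4 + \theta(1+\theta)^2/8$; simplifying yields $\theta(1+\theta^2)/2$. For Spearman's rho I would use $\rho = 12 \int_0^1 \int_0^1 C(u,v)\,du\,dv - 3$ together with the standard values $\int\int uv\,du\,dv = 1/4$, $\int\int \min(u,v)\,du\,dv = 1/3$, $\int\int \max(u+v-1,0)\,du\,dv = 1/6$; substitution and collection of terms once again collapses the $\theta^2$ contributions and produces $\theta(1+\theta^2)/2$. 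For Gini's gamma I would use the diagonal representation
\[
\gamma = 4 \int_0^1 \bigl[ C(u,u) + C(u,1-u) \bigr]\,du - 2,
\]
noting that $\int_0^1 \min(u,1-u)\,du = \int_0^1 \max(2u-1,0)\,du = 1/4$ and that $C(u,1-u)$ kills its $W$-term entirely. The same style of cancellation yields $\theta(1+\theta^2)/2$ a third time.

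The most delicate step is Kendall's tau, via $\tau = 4 \int_0^1 \int_0^1 C(u,v)\,dC(u,v) - 1$. Using the decomposition $C = aM + b\Pi + cW$, bilinearity of the Lebesgue--Stieltjes pairing $(f,\mu) \mapsto \int f\,d\mu$ rewrites $\int C\,dC$ as a sum of nine terms $\alpha_X \alpha_Y I_{XY}$, with $X, Y \in \{M, \Pi, W\}$ and $I_{XY} = \int X\,dY$. The nine atomic concordance integrals can be computed once and for all from the probabilistic meaning of the underlying measures: $I_{MM} = 1/2$, $I_{\Pi\Pi} = 1/4$, $I_{WW} = 0$, $I_{M\Pi} = I_{\Pi M} = 1/3$, $I_{MW} = I_{WM} = 1/4$, and $I_{\Pi W} = I_{W\Pi} = 1/6$. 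One technical subtlety is that when $\theta > 0$ the coefficient $c$ is negative, so $dC$ is being expressed as a signed sum of probability measures; bilinearity still applies at the level of signed Lebesgue--Stieltjes measures because $C$ is itself a bona fide copula. I expect the principal obstacle to be the polynomial bookkeeping at the very end: substituting the cubic expressions for $a,b,c$ into the resulting quadratic form in $(a,b,c)$ produces a sextic in $\theta$ that must be reorganized over the common denominator $24$ to match the stated expression, and it is easy to slip a sign when combining the $ac$ and $bc$ cross terms generated by the negative $W$-coefficient.
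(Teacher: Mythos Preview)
For Spearman's rho, Blomqvist's beta and Gini's gamma your plan coincides with the paper's: it too plugs the explicit $C$ into the standard integral (or point) formulas, uses the elementary values $\int\!\!\int uv=1/4$, $\int\!\!\int\min(u,v)=1/3$, $\int\!\!\int\max(u+v-1,0)=1/6$ (and the analogous diagonal integrals), and combines them linearly.

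For Kendall's tau your route genuinely differs. The paper works from $\tau = 1 - 4\int_0^1\int_0^1 C_u\,C_v\,du\,dv$, writes each partial as a linear combination of $u$, $v$ and indicator functions, multiplies out, and integrates the resulting nine terms one by one. Your approach via $\tau = 4\int C\,dC - 1$ and the bilinear expansion into the nine concordance atoms $I_{XY}=\int X\,dY$ is cleaner and more structural: using $a+b+c=1$ the quadratic form collapses to the Fr\'echet-family formula $\tau=(a-c)(a+c+2)/3$, and since $a-c=\theta(1+\theta^2)/2$ and $a+c=\theta^2$ this gives
\[
\tau \;=\; \frac{\theta(1+\theta^2)(2+\theta^2)}{6}\;=\;\frac{\theta\bigl(8+12\theta^2+4\theta^4\bigr)}{24}.
\]

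You should be aware that this does \emph{not} match the polynomial in the theorem statement. The discrepancy comes from two integrals in the paper's computation, where
\[
\int_0^1\!\!\int_0^1 I_{[u\le v]}I_{[u+v\ge 1]}\,du\,dv
\quad\text{and}\quad
\int_0^1\!\!\int_0^1 I_{[u\ge v]}I_{[u+v\ge 1]}\,du\,dv
\]
are each evaluated as $0$ (via $\int_0^1\int_{1-v}^{v}du\,dv$, ignoring that the inner interval is empty for $v<1/2$); the correct value of each is $1/4$. Restoring these two contributions adds $\theta^2(1-\theta^2)^2/8$ to the paper's expression for $\tau$ and recovers exactly the formula above. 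So your method is sound and will produce the correct Kendall's tau; it will simply not reproduce the stated polynomial, whose extra terms $-3\theta^2+6\theta^4-3\theta^6$ over $24$ stem from this slip. The sextic bookkeeping you were worried about in fact disappears entirely once you pass through $(a-c)(a+c+2)/3$.
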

\begin{proof}
A proof is given in appendix.
\end{proof}

\subsubsection{Tail dependence of bound copula}
In this subsection, we derive the coefficient of the upper and the lower tail dependence for the bound copula:
$$C(u,v) = (1- \theta^2)uv + \frac{\theta}{4}\left[(1+\theta)^2 min(u,v) - (1-\theta)^2 max(u+v-1,0)\right]\cdot$$

\begin{theorem} The coefficient of the lower tail and the upper tail dependence of bound copula are both equal to \( \dfrac{\theta(1+\theta)^2}{4}\).
\end{theorem}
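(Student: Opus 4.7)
The plan is to apply the standard definitions of the lower and upper tail dependence coefficients directly to the bound copula by first evaluating $C(u,u)$ and then splitting into two regimes determined by the behavior of $\max(2u-1,0)$. Recall that
\[
\lambda_L = \lim_{u \to 0^+} \frac{C(u,u)}{u}, \qquad \lambda_U = \lim_{u \to 1^-} \frac{1-2u+C(u,u)}{1-u}.
\]
Substituting $v=u$ in the copula, I obtain
\[
C(u,u) = (1-\theta^2)u^2 + \frac{\theta}{4}\bigl[(1+\theta)^2 u - (1-\theta)^2 \max(2u-1,0)\bigr].
\]

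For the lower tail, I would restrict attention to $u < 1/2$, so that $\max(2u-1,0)=0$ and the expression simplifies to $C(u,u) = (1-\theta^2)u^2 + \frac{\theta(1+\theta)^2}{4} u$. Dividing by $u$ and letting $u \to 0^+$, the quadratic term vanishes and the limit is immediately $\lambda_L = \frac{\theta(1+\theta)^2}{4}$.

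For the upper tail I would take $u > 1/2$, replacing $\max(2u-1,0)$ by $2u-1$, and substitute $u = 1-t$ with $t \to 0^+$ to expand $1-2u+C(u,u)$ in powers of $t$. The main technical obstacle is verifying that the constant (order $t^0$) term vanishes; this is the compatibility condition forcing $\lambda_U$ to exist as a finite number, and it comes out to
\[
-1 + (1-\theta^2) + \frac{\theta}{4}\bigl[(1+\theta)^2-(1-\theta)^2\bigr] = -\theta^2 + \frac{\theta}{4}(4\theta) = 0,
\]
using the algebraic identity $(1+\theta)^2-(1-\theta)^2 = 4\theta$. Once this cancellation is confirmed, dividing through by $1-u=t$ amounts to extracting the coefficient of $t$. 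That coefficient simplifies via $(1+\theta)^2 - 2(1-\theta)^2 = -1+6\theta-\theta^2$, giving $2\theta^2 + \frac{\theta - 6\theta^2 + \theta^3}{4} = \frac{\theta(1+\theta)^2}{4}$, and hence $\lambda_U = \frac{\theta(1+\theta)^2}{4}$, matching $\lambda_L$.

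The proof is therefore essentially a routine limit computation, but with one genuine verification, namely that the constant term in the numerator of $\lambda_U$ cancels; this is where I expect a careless reader (or writer) to slip. As a sanity check one can note that at $\theta = 1$ the copula is the Frechet upper bound $\min(u,v)$, for which both tail coefficients are known to equal $1$, and indeed $\frac{1\cdot 2^2}{4} = 1$; at $\theta=0$ the copula is independence and the formula gives $0$; at $\theta=-1$ the copula is the Frechet lower bound which has no tail dependence, and the formula again returns $0$. These boundary cases corroborate the claimed expression.
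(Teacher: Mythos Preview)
Your proposal is correct and follows essentially the same approach as the paper: both compute $C(u,u)$ explicitly and evaluate the defining limits for $\lambda_L$ and $\lambda_U$ directly. The only cosmetic difference is that for $\lambda_U$ the paper invokes L'Hospital's rule on the $\tfrac{0}{0}$ form, whereas you substitute $u=1-t$ and expand, which amounts to the same computation and has the minor virtue of making the ``constant term vanishes'' check explicit.
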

\begin{proof}
A proof is given in appendix.
\end{proof}

\subsection{A perturbed copula}
In this sub-section, we construct a new class of perturbation of copula in terms of its lower and upper Frechet bounds and investigate the effects of perturbation of this bivariate copula on several measures of dependency (Spearman's rho, Kendall's tau, Blomqvist's beta, Gini's gamma) and the co-efficient of the lower and the upper tail dependencies. When we use copula to a real data, to analyze the dependency, the copula must fit to the data. So, when any particular copula does not fit to real data, we can change it by using this copula accordingly to fit the data. In some cases, a minor perturbation of copula fits  the data better that the original copula. It has been demonstrated that the three measures of dependencies (Spearman's rho, Blomqvist's beta and Gini's gamma) of the perturbed copula depend on the parameter $\alpha$. Moreover, we have shown that the co-efficient of the lower and upper tail dependency is proportional to the perturbation parameter $\alpha \in [0,1]$.

\subsubsection{The difference between upper and lower bounds}
We derive the difference between the two bounds which is given in the following proposition:
\begin{proposition}
The difference between the upper bound and the lower bound is one of $u,v, 1-u, 1-v.$ That is ,
\begin{equation}
  M(u,v) - W(u,v) =
    \begin{cases}
      u, & \text{if $u \leq v$, $u + v \leq 1$}\\
      1-v, & \text{if $u \leq v$, $u + v > 1$}\\
      v,  &\text{if $u > v$, $u + v \leq 1$}\\
      1-u, & \text{if $u > v$, $u + v > 1$}\\
    \end{cases}
\end{equation}
\end{proposition}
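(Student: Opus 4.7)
The plan is to prove this by a direct case analysis on the sign of $u-v$ and of $u+v-1$, since these are precisely the quantities that determine the values of $M(u,v)=\min(u,v)$ and $W(u,v)=\max(u+v-1,0)$, respectively. There are only four cases to consider, matching exactly the four cases listed in the statement.

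First I would recall that $M(u,v)=u$ iff $u\leq v$ and $M(u,v)=v$ iff $u>v$, and that $W(u,v)=0$ iff $u+v\leq 1$ while $W(u,v)=u+v-1$ iff $u+v>1$. These two dichotomies are independent, so they partition the unit square $[0,1]^2$ into four regions, and in each region both $M$ and $W$ are given by a single affine expression. Then I would simply compute $M(u,v)-W(u,v)$ in each region:
\begin{itemize}
\item If $u\leq v$ and $u+v\leq 1$, then $M-W=u-0=u$.
\item If $u\leq v$ and $u+v>1$, then $M-W=u-(u+v-1)=1-v$.
\item If $u>v$ and $u+v\leq 1$, then $M-W=v-0=v$.
\item If $u>v$ and $u+v>1$, then $M-W=v-(u+v-1)=1-u$.
\end{itemize}
This exhausts all possibilities and reproduces the four cases displayed in the proposition.

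There is no substantive obstacle: the argument is a one-line check in each branch, and the only thing to be careful about is that the boundary cases $u=v$ and $u+v=1$ are handled consistently (the statement uses $\leq$ versus $>$, and on the boundaries the two candidate formulas agree, so the choice of convention is immaterial). Hence the proposition follows immediately by case analysis from the definitions of the Frechet lower and upper bounds.
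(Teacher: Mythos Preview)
Your proof is correct and follows essentially the same approach as the paper: a direct four-case analysis based on the comparisons $u\lessgtr v$ and $u+v\lessgtr 1$, computing $\min(u,v)-\max(u+v-1,0)$ in each region. The paper's argument is identical, only omitting your explicit remark about consistency on the boundary lines.
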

\begin{proof}
A proof is given in the appendix.
\end{proof}

\subsubsection{Perturbation of copula}
We investigate a new copula which is obtained by the perturbation of a copula. Let $C(u,v)$ be a bivariate copula. We consider a new copula $C_{\alpha}(u,v)= C(u,v) + F_{\alpha}(u,v)$, referred to as the perturbed copula of $C(u,v)$, where $F_{\alpha}(u,v):[0,1]^2 \to R$ is a continuous function and defined by
\begin{equation}
F_{\alpha}(u,v)= \frac{\alpha}{2}[M(u,v) - W(u,v)]
\end{equation}
with $M(u,v)$ and $W(u,v)$ being the upper and lower Frechet bounds of the copula $C(u,v), \alpha \in [0,1]$. The function $F_{\alpha}$ is called a perturbation factor.

\begin{theorem}
Let $C(u, v)$ be any bivariate copula, defined by $C:[0,1]^2 \to [0,1]$. Then, a perturbed copula defined by $C_{\alpha}(u,v)= C(u,v) + F_{\alpha}(u,v)$ referred to as associated with $C(u, v)$, is a copula, where $\alpha \in [0,1].$
\end{theorem}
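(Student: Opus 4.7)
The plan is to verify that $C_\alpha$ satisfies the two defining axioms of a copula: the correct boundary behaviour and the 2-increasing property.

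First, I would dispatch the boundary conditions. A direct evaluation gives $M(u,0)=W(u,0)=0$, $M(0,v)=W(0,v)=0$, $M(u,1)=W(u,1)=u$ and $M(1,v)=W(1,v)=v$, so the perturbation factor $F_\alpha$ vanishes identically on the edges of $[0,1]^2$. Hence $C_\alpha$ inherits its boundary values from $C$: $C_\alpha(u,0)=C_\alpha(0,v)=0$, $C_\alpha(u,1)=u$ and $C_\alpha(1,v)=v$.

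Second, I would address the 2-increasing property. For any rectangle $R=[u_1,u_2]\times[v_1,v_2]\subseteq[0,1]^2$, linearity of the rectangular increment gives
\[ V_{C_\alpha}(R)\;=\;V_C(R)\;+\;\tfrac{\alpha}{2}\bigl(V_M(R)-V_W(R)\bigr), \]
and $V_C(R)\ge 0$ because $C$ is a copula. Using Proposition 2.1 I would partition $[0,1]^2$ by the diagonal $u=v$ and the antidiagonal $u+v=1$ into the four regions on which $M-W$ equals $u$, $1-v$, $v$ and $1-u$ respectively. Since $M-W$ is affine in each region, whenever $R$ is contained in a single region the perturbation contributes zero to the rectangular increment; and when $R$ straddles only the main diagonal one has $V_M(R)\ge 0$ and $V_W(R)=0$, so the contribution is non-negative. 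In both cases the inequality is immediate.

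The main obstacle is the remaining configuration in which $R$ crosses the antidiagonal $u+v=1$: there $V_W(R)>0$ while $V_M(R)$ may vanish, so the perturbation contributes a negative quantity and one has to show that $V_C(R)$ alone dominates $\tfrac{\alpha}{2}V_W(R)$. My first line of attack would be to use the Frechet lower bound $C(u,v)\ge W(u,v)$ at the four corners of $R$ to produce a lower estimate for $V_C(R)$ in terms of the length of the antidiagonal segment lying inside $R$, which is precisely the quantity driving $V_W(R)$. A fully rigorous argument will almost certainly require a finite case analysis according to which of the four regions of Proposition 2.1 contains each of the four corners of $R$; in each of the resulting configurations the required inequality $V_C(R)\ge\tfrac{\alpha}{2}\bigl(V_W(R)-V_M(R)\bigr)$ should reduce to an elementary calculation that can be verified one case at a time. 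This antidiagonal step is the crux of the proof and where I would expect all the real work to occur.
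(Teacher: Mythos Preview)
Your treatment of the boundary conditions is correct and essentially identical to the paper's.

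For the 2-increasing property, your analysis is actually more careful than the paper's. The paper splits into four cases according to which of the Proposition~2.1 regions contains the points $(u_i,v_i)$, but it only treats the situation in which $(u_1,v_1)$ and $(u_2,v_2)$ lie in the \emph{same} region, tacitly assuming the remaining corners behave consistently. In each such case the paper argues that the perturbation increment $\tfrac{\alpha}{2}\bigl(V_M(R)-V_W(R)\bigr)$ is itself nonnegative, so $V_C(R)$ is never called upon to compensate for anything. The cross-region rectangles that you flagged --- those straddling the antidiagonal but not the diagonal --- are simply not addressed.

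You are right that this is the crux, but the inequality you hope to establish, $V_C(R)\ge \tfrac{\alpha}{2}\bigl(V_W(R)-V_M(R)\bigr)$, is false in general, and so is the theorem as stated. Take $C=M$, $\alpha=1$, and $R=[0.7,0.9]\times[0.2,0.4]$. The diagonal misses $R$, so $V_M(R)=0$, while $V_W(R)=0.3-0.1-0.1+0=0.1$; hence
\[
V_{C_\alpha}(R)=V_M(R)+\tfrac{1}{2}\bigl(V_M(R)-V_W(R)\bigr)=0-0.05=-0.05<0.
\]
Equivalently, $C_\alpha(0.7,0.4)=M(0.7,0.4)+\tfrac{1}{2}\bigl(M(0.7,0.4)-W(0.7,0.4)\bigr)=0.4+0.15=0.55$, which already violates the Fr\'echet upper bound $M(0.7,0.4)=0.4$. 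So no case-by-case computation can rescue the antidiagonal configuration: the obstacle you isolated is fatal rather than technical, and the paper's argument is incomplete precisely at the point where the statement breaks down.
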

\begin{proof}
A proof is given in appendix.
\end{proof}

\subsubsection{Dependence properties of perturbed copula}
Now, we derive four measures of dependency, Spearman's rho, Kendall's tau, Blomqvist's beta and Gini's gamma of the perturbed copula:
$$C_{\alpha}(u,v)= C(u,v) + \frac{\alpha}{2}\Big[M(u,v) - W(u,v)\Big]$$.

\begin{theorem} The measures of dependency: Spearman's rho, Blomqvist's beta and the Gini's gamma of the perturbed copula
$$ C_{\alpha}(u,v)= C(u,v) + \frac{\alpha}{2}\Big[M(u,v) - W(u,v)\Big],$$
are given by $\rho(C)+ \alpha$,  $\beta(C)+ \alpha$, $\gamma(C)+ \alpha$, and the Kendall's tau is $\tau(C)+ 2\alpha \int_0^1\int_0^1\Big[M(u,v)- W(u,v)\Big]\frac{\partial ^2 C}{\partial u \partial v} dudv$.
\end{theorem}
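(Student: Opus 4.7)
The plan is to feed the decomposition $C_{\alpha}=C+\tfrac{\alpha}{2}(M-W)$ into the standard integral or point formulas for each of the four measures of concordance, peel off the contribution of $C$, and compute the residual term explicitly. The three measures $\rho$, $\beta$, $\gamma$ are affine functionals of the copula, so linearity does almost all the work; Kendall's tau is quadratic in the copula and is the one that genuinely needs care.

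For Spearman's rho, Blomqvist's beta, and Gini's gamma I would start from $\rho(C)=12\iint C\,du\,dv-3$, $\beta(C)=4C(\tfrac12,\tfrac12)-1$, and $\gamma(C)=4\int_0^1[C(u,u)+C(u,1-u)]\,du-2$. Substituting $C_{\alpha}$ and using linearity, each increment over $\rho(C)$, $\beta(C)$, $\gamma(C)$ equals $\tfrac{\alpha}{2}$ times the same functional applied to $M-W$. Plugging in the elementary values $\iint M=1/3$, $\iint W=1/6$, $M(\tfrac12,\tfrac12)=\tfrac12$, $W(\tfrac12,\tfrac12)=0$, $\int_0^1 M(u,u)\,du=\tfrac12$, $\int_0^1 W(u,u)\,du=\tfrac14$, $\int_0^1 M(u,1-u)\,du=\tfrac14$, and $\int_0^1 W(u,1-u)\,du=0$, each of the three increments collapses to $\alpha$. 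Proposition 2.1 provides a convenient closed form for $M-W$ that can be used to verify these integrals by a short case analysis.

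For Kendall's tau I would start from the representation $\tau(C)=4\iint C(u,v)\,dC(u,v)-1$ and substitute $C_{\alpha}=C+\tfrac{\alpha}{2}(M-W)$. Treating $C$ as absolutely continuous with density $c=\partial^{2}C/\partial u\,\partial v$, the mixed partials of $M$ and $W$ vanish almost everywhere, so $C_{\alpha}$ and $C$ share the same Lebesgue density. Pairing $C_{\alpha}$ against $c\,du\,dv$ then produces $\tau(C)$ plus the cross term $2\alpha\iint(M-W)\,c\,du\,dv$, matching the formula in the theorem.

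The step I expect to be most delicate is precisely the Kendall's tau computation, because $M$ and $W$ carry singular mass on the diagonals $\{u=v\}$ and $\{u+v=1\}$, respectively. A fully rigorous expansion of $\iint C_{\alpha}\,dC_{\alpha}$ also generates terms $\iint C\,d(M-W)$ and $\tfrac{\alpha}{2}\iint(M-W)\,d(M-W)$, and the clean formula in the statement is only recovered by dropping those singular cross contributions — equivalently, by interpreting $dC_{\alpha}$ as the absolutely continuous measure $c\,du\,dv$ inherited from $C$. Making the underlying hypothesis on $C$ (and the convention for $dC_{\alpha}$) explicit is where the argument must be handled carefully.
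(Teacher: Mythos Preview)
Your proposal is correct and follows essentially the same route as the paper: linearity of $\rho$, $\beta$, $\gamma$ together with the elementary values $\iint M=1/3$, $\iint W=1/6$, $M(\tfrac12,\tfrac12)=\tfrac12$, $W(\tfrac12,\tfrac12)=0$, and the diagonal/antidiagonal integrals, and for Kendall's tau the observation that $\partial^{2}(M-W)/\partial u\,\partial v=0$ almost everywhere so that only the cross term $2\alpha\iint(M-W)\,c\,du\,dv$ survives. Your caveat about the singular mass of $M$ and $W$ is a point of added care that the paper does not raise; the paper simply works with the pointwise second mixed partial and Proposition~2.1, implicitly adopting the same absolutely-continuous convention you describe.
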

\begin{proof}
A proof is given in appendix.
\end{proof}

\subsubsection{Examples}
In this sub-section, we provide examples of some copulas to define the measures of dependency for the perturbed copula.\\
\break
\textbf{For the product copula}\\
As the measures of dependency for the product copula $C(u,v) = uv, $ is 0, the measures of
dependency for the perturbed copula $ C_{\alpha}(u,v)= uv + \frac{\alpha}{2}\Big[\min(u,v) - \max(u+v-1, 0)\Big]$ are given by
\begin{align*}
\rho(C_{\alpha})&= \alpha, \quad \tau(C_{\alpha})= 2\alpha \int_0^1\int_0^1\Big[M(u,v) - W(u,v)\Big]\frac{\partial ^2 C}{\partial u \partial v} dudv,\\
\gamma(C_{\alpha})& = \alpha, \quad \textrm{and} \quad \beta(C_{\alpha})= \alpha.
\end{align*}
\break
\textbf{For the FGM copula}\\
The measures of dependency for the FGM copula $C(u,v) = uv  + \theta uv(1-u)(1-v)$, are given by
\begin{align*}
\rho(C)&= \frac{\theta}{3}, \quad \tau(C) = \frac{2\theta}{9}, \quad \beta(C) =  \frac{\theta}{4}, \quad \textrm{and} \quad \gamma(C)= \frac{4\theta}{15}.
\end{align*}
So, the four measures of dependency for the perturbed copula $ C_{\alpha}(u,v)= uv + \frac{\alpha}{2}\left[\min(u,v) - \max(u+v-1, 0)\right]$ are given by
\begin{align*}
\rho(C_{\alpha})&= \frac{\theta}{3} + \alpha, \quad \tau(C_{\alpha})=  \frac{2\theta}{9} + 2\alpha \int_0^1\int_0^1\Big[M(u,v) - W(u,v)\Big]\frac{\partial ^2 C}{\partial u \partial v} dudv , \\
\beta(C_{\alpha}) &=  \frac{\theta}{4} + \alpha, \quad \gamma(C_{\alpha}) =  \frac{4\theta}{15} + \alpha .
\end{align*}
\break
\textbf{For the Bound copula}\\
By using measures of dependencies of bound copula derived in theorem 2.2, the measures of dependency of perturbed copula are given by
\begin{align*}
\rho(C_{\alpha})&= \frac{\theta(\theta^2 + 1)}{2} + \alpha , \\
\tau(C_{\alpha}) &=   \frac{\theta(8 -3\theta + 12\theta^2 + 6\theta^3 + 4\theta^4 - 3\theta^5)}{24}  + 2\alpha \int_0^1\int_0^1\Big[M(u,v) -W(u,v)\Big] \frac{\partial ^2 C}{\partial u \partial v} dudv, \\
 \beta(C_{\alpha}) &=  \frac{\theta(\theta^2 + 1)}{2} +\alpha, \quad \textrm{and} \quad
\gamma(C_{\alpha}) =  \frac{\theta(\theta^2 + 1)}{2} + \alpha.
\end{align*}

\subsubsection{Tail dependence of perturbed copula}
In this sub-section, we derive the coefficient of the upper and the lower tail dependency of the perturbed copula:
$$C_{\alpha}(u,v)= C(u,v) + \frac{\alpha}{2}\Big[M(u,v) - W(u,v)\Big].$$

\begin{theorem}
The coefficients of the lower and the upper tail dependence of the perturbed copula $ C_{\alpha}(u,v)= C(u,v) + \frac{\alpha}{2}\Big[\min(u,v) - \max(u+v-1, 0)\Big]$ are given by \( \lambda_{L}^{C_{\alpha}} = \lambda_L^C + \frac{\alpha}{2}\) and
\( \lambda_{U}^{C_{\alpha}} = \lambda_U^C + \frac{\alpha}{2}.\), respectively.
\end{theorem}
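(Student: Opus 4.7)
The plan is to reduce the statement directly to the standard definitions
\[
\lambda_L^{C_\alpha} = \lim_{u \to 0^+} \frac{C_\alpha(u,u)}{u}, \qquad
\lambda_U^{C_\alpha} = \lim_{u \to 1^-} \frac{1 - 2u + C_\alpha(u,u)}{1 - u},
\]
and then exploit the fact that the perturbation term evaluated on the diagonal is an explicit piecewise-linear function of $u$. First I would substitute $v = u$ into the definition of $C_\alpha$ to obtain
\[
C_\alpha(u,u) = C(u,u) + \tfrac{\alpha}{2}\bigl[\min(u,u) - \max(2u-1,0)\bigr] = C(u,u) + \tfrac{\alpha}{2}\bigl[u - \max(2u-1,0)\bigr].
\]
The key observation is that near the two endpoints the inner $\max$ degenerates: for $u \in (0, 1/2)$ we have $\max(2u-1,0) = 0$, while for $u \in (1/2,1)$ we have $\max(2u-1,0) = 2u-1$.

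For the lower tail, I would restrict to $u \in (0, 1/2)$ so that $C_\alpha(u,u) = C(u,u) + \tfrac{\alpha}{2}\, u$. Dividing by $u$ and passing to the limit gives
\[
\lambda_L^{C_\alpha} = \lim_{u \to 0^+} \frac{C(u,u)}{u} + \frac{\alpha}{2} = \lambda_L^C + \frac{\alpha}{2}.
\]
For the upper tail, I would restrict to $u \in (1/2,1)$ so that $u - \max(2u-1,0) = 1-u$, giving $C_\alpha(u,u) = C(u,u) + \tfrac{\alpha}{2}(1-u)$. Then
\[
\frac{1 - 2u + C_\alpha(u,u)}{1 - u} = \frac{1 - 2u + C(u,u)}{1-u} + \frac{\alpha}{2},
\]
and letting $u \to 1^-$ yields $\lambda_U^{C_\alpha} = \lambda_U^C + \tfrac{\alpha}{2}$.

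There is no real obstacle here beyond bookkeeping; the whole argument is a linearity-of-limit computation once one notices that the diagonal restriction of the Fr\'echet-bound difference $M(u,u) - W(u,u)$ equals $u$ on the lower half of the unit interval and $1-u$ on the upper half, as already recorded in the proposition on the difference of bounds. The only point that deserves a sentence of justification is that $\lambda_L^C$ and $\lambda_U^C$ are assumed to exist, so that the limits split additively; otherwise the identities should be stated in terms of $\liminf/\limsup$. With that caveat stated, the theorem follows immediately.
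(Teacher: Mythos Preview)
Your proof is correct and follows essentially the same route as the paper: substitute $v=u$, observe that $\max(2u-1,0)=0$ near $0$ and $\max(2u-1,0)=2u-1$ near $1$, and split the resulting limit additively. Your extra remark that the identities presuppose the existence of $\lambda_L^C$ and $\lambda_U^C$ is a fair caveat that the paper leaves implicit.
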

\begin{proof}
A proof is given in appendix.
\end{proof}

\section{Dependency between two service times}
In this section, we consider two parallel queues with two dependent servers where two replicas of each task are launched with cancel-on-finish policy. Then, we analyze and evaluate the mean waiting time and the mean service time of a task. To reduce the waiting time, any task can try in more than one queue. One method to reduce the waiting time is the use of redundancy. Running a task on multiple machines and waiting for the earliest copy to finish can reduce the waiting time \cite{joshi2015efficient}.

\subsection{Waiting time and service time analysis}
Joshi, Soljanin and Wornell \cite{joshi2017efficient} analyzed the expected latency with the full replication and they used two policies, cancel on finish and cancel on start. Here, we consider the process with cancel on finish only because we can deal the dependence between service times using this policy. For cancel on start policy, it is difficult to use copula to analyze the dependency between the service times.  we consider two parallel queues with two parallel servers and deal with the situation where the servers are dependent.

With the cancel-on-finish policy, when any copy of the task has been served, the other copy is canceled and removed from the system immediately, or upon the service completion of any copy of the task, the other copy will be abandoned from the system immediately. The mean waiting time is the expected waiting time in the two queues, and the mean service time is defined as the expected total time spent by both servers on both replicas, since the service resources of the both servers have been allowed to the task.
\begin{lemma}
If we launch two copies for each task using the cancel-on-finish policy, the waiting time is equivalent in distribution to that of an  $ M/G/1$ queue with service time; $S_{1:2}= \min(S_1, S_2)$, where $S_i =$ Service time of server $ i$, $i = 1,2.$ That is, the expected waiting time and the expected service time are given by
$$ E(W) = E[W^{M/G/1}] = \frac{\lambda E(S_{1:2}^2)}{2(1- \lambda E(S_{1:2}))}, \quad and \quad E(S) = 2E(S_{1:2}), $$ respectively where $\lambda$ is the arrival rate of the task.
\end{lemma}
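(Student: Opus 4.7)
The plan is to argue that the two-queue system with synchronous replication and cancel-on-finish collapses to a single $M/G/1$ queue whose service time is $S_{1:2}=\min(S_1,S_2)$, after which the waiting and service time expressions follow from the Pollaczek--Khinchine formula and a simple workload accounting.

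First I would establish the synchronization invariant: at every instant the two queues contain exactly the same (ordered) sequence of tasks, and whenever the two servers are busy they are busy on copies of the same task. The proof is by induction on event times. At the initial empty state this holds trivially. At any arrival epoch, one copy is simultaneously placed at the back of each queue, preserving the invariant. Under cancel-on-finish, the epoch at which any task leaves either server is the same epoch the other copy is cancelled from the other server, so both servers become available simultaneously and the head-of-line tasks in both queues advance in lockstep. Hence the invariant is maintained across all event types.

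From this invariant, the sojourn of a single task in service occupies both servers for exactly $\min(S_1,S_2)=S_{1:2}$ time units, with $(S_1,S_2)$ drawn from the joint service-time distribution of the two servers (their dependence structure enters only through the distribution of $S_{1:2}$). Consequently, the arrival process of tasks into the joint system is Poisson of rate $\lambda$, each task requires an i.i.d.\ service time distributed as $S_{1:2}$, and the server is busy iff there is at least one task in the system. This is the exact definition of an $M/G/1$ queue with service time $S_{1:2}$, so the waiting time distribution coincides with that of $W^{M/G/1}$, and the Pollaczek--Khinchine mean-value formula immediately gives
\[
E(W) \;=\; \frac{\lambda\, E(S_{1:2}^{2})}{2\bigl(1-\lambda\,E(S_{1:2})\bigr)}.
\]
For the expected service time, the task simultaneously consumes $S_{1:2}$ units on each of the two servers, so the total resource expended per task is $S_1\wedge S_2$ on server~1 and $S_1\wedge S_2$ on server~2, giving $E(S)=2E(S_{1:2})$.

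The main obstacle in making this rigorous is justifying the synchronization invariant cleanly in the presence of possibly dependent $S_1$ and $S_2$; once that invariant is proved by the induction above, the rest is just invoking Pollaczek--Khinchine and adding workloads. I would also note in the argument that stability requires $\lambda E(S_{1:2})<1$, matching the denominator in the displayed formula.
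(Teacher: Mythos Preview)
Your argument is correct. The paper itself does not give a proof of this lemma at all: it simply refers the reader to Joshi, Soljanin and Wornell \cite{joshi2017efficient}. So there is no ``paper's own proof'' to compare against beyond a citation.

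What you have written is in fact a self-contained proof of exactly the result that \cite{joshi2017efficient} establishes. Your synchronization invariant (both queues always contain the same ordered list of tasks, and both servers start and stop work on the same task simultaneously) is the key observation, and your inductive justification over event epochs is the standard way to make it rigorous. Once that invariant holds, the reduction to a single $M/G/1$ queue with generic service time $S_{1:2}=\min(S_1,S_2)$ is immediate, and the Pollaczek--Khinchine mean formula and the workload-doubling argument give the two displayed expressions. Your remark that dependence between $S_1$ and $S_2$ enters only through the law of $S_{1:2}$ is also correct and is precisely what the rest of Section~3 exploits. The stability caveat $\lambda E(S_{1:2})<1$ is appropriate and is implicit in the paper's use of the formula.
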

\begin{proof} A proof is given in the paper \cite{joshi2017efficient}.\\
\end{proof}

\subsection{Dependency between service times using the bound copula}
In many applications, servers are working independently, for example, in manufacturing, the speed of a machine is, in general, fixed and independent of other machines. However, in some other cases, service times can be dependent. For example, if servers are intelligent (say human being), peer's pressure can cause its service pair dependent. Even when servers are not human-being, competitions between different service providers can make service times non-independent. One server's pace motivates other servers to be slower or faster. If one server works at a relaxed pace, then other servers also work at a relaxed pace (slow) which is a positive dependence. If one server works at a relaxed pace, other servers increase their pace to maintain the system output which is a negative dependence between the servers. So, it is interesting to deal with the situation in which servers can be dependent on each other. To analyze dependency between the servers, we use the bound copula. For this method, we consider that the service times are shifted exponential and hypo exponential distributions, respectively.

\subsubsection{Service time is shifted exponential}
In this subsection, we estimate the mean waiting time and the mean service time for dependent service time and assume that the service time is shifted exponential distribution.
\begin{definition}
A continuous random variable $X$ is said to have the shifted exponential distribution with parameters $(\mu, \delta)$ if it has probability density function:
\begin{equation}
 f(x) =
 \begin{cases}
 \mu e^{-\mu(x-\delta)}, & \text{if $x \geq \delta$}\\
 0, & \text{if otherwise}\\
 \end{cases}
 \end{equation}
\end{definition}

Taking $\delta =0$ gives the pdf of the exponential distribution function. The cumulative distribution function $(CDF)$ of $X$ is given by $F_{X}(x) = 1 - e^{-\mu(x-\delta)},$ where $x \geq \delta$. Let the arrival rate be $\lambda$, and the service times $S_1$ and $S_2$ be shifted exponential with parameters $(\mu,\lambda)$. Then, the probability density function of service times $S_i$, $i$ =1, 2 is given by (3.1). \\
\break
\textbf{3.2.1.1 Estimating $E(W)$ and $E(S)$ for dependent service times}\\
When two replicas of each task are launched to the two servers by the cancel-on-finish policy, we have from lemma (3.1) that
\begin{align*}
E(W) &= E[W^{M/G/1}] = \frac{\lambda E(S_{1:2}^2)}{2\big[1- \lambda E(S_{1:2})\big]},
\end{align*}
and $E(S) = 2E(S_{1:2}),$ where $S_{1:2} = \min(S_1, S_2).$ Now,
$$ P(S_{1:2} > x) = P(\min(S_1, S_2) > x) = P(S_1 > x, S_2 > x).$$
\break
\textbf{Case(i)} If the service times $S_1$ and $S_2$ are independent,
\begin{align*}
P(S_{1:2} > x) &= P(S_1 > x, S_2 > x)= P(S_1 > x)(S_2 > x) \nonumber \\
&= e^{-\mu(x-\delta)}.e^{-\mu(x-\delta)}= e^{-2\mu(x-\delta)}.
\end{align*}
So, probability distribution function of $S_{1:2}$ is given by, \\
\begin{align*}
P(S_{1:2} \leq x)= F_{X_{1:2}}(x) = 1 - e^{-2\mu(x-\delta)},
\end{align*}
and its probability density function is given by
\begin{align*}
f_{S_{1:2}}(x)= 2\mu e^{-2\mu(x-\delta)}, x \geq \delta.
\end{align*}
Now,
$$E(S_{1:2}) = \int_{\delta}^{\infty} x f_{S_{1:2}}(x)dx = \int_{\delta}^{\infty}2\mu x e^{-2\mu(x-\delta)}dx = \delta + \frac{1}{2\mu} $$ and
\begin{align*}
E(S_{1:2}^2) &= \int_{\delta}^{\infty} x^2 f_{S_{1:2}}(x)dx = \int_{\delta}^{\infty}2\mu x^2 e^{-2\mu(x-\delta)}dx = \delta^2 + \frac{\delta}{\mu} + \frac{1}{2\mu^2}. \\
Thus,~{} E(W) &= E[W^{M/G/1}] = \frac{\lambda E(S_{1:2}^2)}{2(1- \lambda E(S_{1:2}))}= \frac{\lambda (\delta^2 + \frac{\delta}{\mu} + \frac{1}{2\mu^2})}{2[1- \lambda(\delta + \frac{1}{2\mu})]},
\end{align*}
Then, the mean service time is given by
$$ E(S) = 2E(S_{1:2})= 2\Big(\delta + \frac{1}{2\mu}\Big) = 2\delta + \frac{1}{\mu}. $$
\break
\textbf{Example:} \\
If we choose $\lambda = 0.25$, $\mu = 0.5$, $\rho = \frac{\lambda}{\mu} = 0.5$ and  $\delta= 1$, we get, $E(W) = 1.25$ and $E(S) = 4$. \\
\break
\textbf{Case(ii)} For service times $S_1$ and $S_2$ are dependent, consider $ P(S_{1:2} > x) = P(S_1 > x, S_2 > x)$. Let $u = F_{S_1}(x)= 1 - e^{-\mu(x-\delta)}$  and $v = F_{S_2}(x)= 1 - e^{-\mu(x-\delta)}$. Copulas are used to describe the dependence between the two service times $S_1$ and $S_2$ and we use Bound copula, which is defined as $C(u,v) = (1-\theta^2)uv + \frac{\theta}{4}\Big[(1+\theta)^2 \min(u,v) - (1-\theta)^2 \max(u+v-1,0)\Big]$, where $\theta \in [-1,1]$ is a dependence parameter.

Now, the joint survival function of $S_1$ and $S_2$ is given by
\begin{align*}
P(S_{1:2} > x)&= P(S_1 > x, S_2 > x)= \bar{C}(1-u, 1-v) = 1 - u - v + C(u,v)
\end{align*}
Then, for $u+v \geq 1$, the distribution function of $S_{1:2} = \min(S_1,S_2)$ is given by
\begin{align*}
F_{S_{1:2}}(x)&= 2 - 2e^{-\mu(x-\delta)} - (1-\theta^2)\Big[1 - 2e^{-\mu(x-\delta)}+ e^{-2\mu(x-\delta)}\Big]- \frac{\theta}{4}\Big[(1+\theta)^2 \\
&\quad(1 - e^{-\mu(x-\delta)}) -(1-\theta)^2(1- e^{-\mu(x-\delta)}+ 1 - e^{-\mu(x-\delta)}- 1)\Big].
\end{align*}
and the density function of $S_{1:2}(x)$ is given by
\begin{align*}
f_{S_{1:2}}(x)&= 2\mu e^{-\mu(x-\delta)}-(1-\theta^2)\Big[2\mu e^{-\mu(x-\delta)}- 2\mu e^{-2\mu(x-\delta)}\Big]- \frac{\theta}{4}\Big[(1+\theta)^2 \mu e^{-\mu(x-\delta)}\Big]\\
 &\quad + \frac{\theta}{4}\Big[(1-\theta)^2 2\mu e^{-\mu(x-\delta)}\Big].
\end{align*}
Then, the mean of $S_{1:2}$ is given by
\begin{align*}
E(S_{1:2})&= \int_{\delta}^{\infty} xf_{S_{1:2}}(x)dx \\
E(S_{1:2})&= 2\mu\left(\frac{\delta}{\mu}+ \frac{1}{\mu^2}\right)- (1-\theta^2)\left[2\mu\left(\frac{\delta}{\mu}+ \frac{1}{\mu^2}\right)- 2\mu\left(\frac{\delta}{2\mu}+ \frac{1}{4\mu^2}\right)\right] \\
 &\quad -\frac{\theta}{4}(1+\theta)^2 \mu \left(\frac{\delta}{\mu}+ \frac{1}{\mu^2}\right)+ \frac{\theta}{4}(1 -\theta)^2 2\mu \left(\frac{\delta}{\mu}+ \frac{1}{\mu^2}\right),~{}~{} and
\end{align*}
\begin{align*}
E(S_{1:2}^2) &=  \int_{\delta}^{\infty} x^2 f_{S_{1:2}}(x)dx \\
&= 2\mu\left(\frac{\delta^2}{\mu}+ \frac{2\delta}{\mu^2}+ \frac{2}{\mu^3}\right)- (1-\theta^2)\bigg[2\mu\left(\frac{\delta^2}{\mu}+ \frac{2\delta}{\mu^2}+ \frac{2}{\mu^3}\right)-2\mu\Big(\frac{\delta^2}{2\mu}+ \frac{2\delta}{4\mu^2}+  \\
&\quad \frac{2}{8\mu^3}\Big)\bigg]- \frac{\theta}{4}(1+\theta)^2 \mu\left(\frac{\delta^2}{\mu}+ \frac{2\delta}{\mu^2}+ \frac{2}{\mu^3}\right)+ \frac{\theta}{4}(1- \theta)^2 2\mu\left(\frac{\delta^2}{\mu}+ \frac{2\delta}{\mu^2}+ \frac{2}{\mu^3}\right).
\end{align*}
\break
\textbf{Example:}\\
Using $\delta = 1$ and $\mu = 0.5$, we have after calculations,
$$ E(S_{1:2}) = 2 + 0.75\theta - 0.5 \theta^2 + 0.75 \theta^3 ~{}~{}and ~{}~{}E(S_{1:2}^2) = 5 + 3.25\theta + 1.5\theta^2 + 3.25\theta^3.$$
Therefore, the values of $E(W)$ and $E(S)$ are given by \\
$$ E(W) = \frac{\lambda E(S_{1:2}^2)}{2[1 - \lambda E(S_{1:2})]}= \frac{1.25+0.8125\theta+ 0.375\theta^2 + 0.8125\theta^3}{1- 0.375\theta+ 0.25\theta^2- 0.375\theta^3} ~{}~{}and $$
$$ E(S) = 2E(S_{1:2}) = 4 + 1.5\theta - \theta^2 + 1.5\theta^3. $$
The values of $E(W)$ and $E(S)$ for positive and negative measures of dependency ($\theta$) are shown in tables (2) and (3). The line graphs of $E(W)$ and $E(S)$ corresponding to positive and negative dependence are shown in figure 2.  \\

\begin{table}[hbt!]
\begin{center}
\maketitle
\begin{tabular}{|c|c|c|c|c|c|c|}
\hline
$\theta$ & 0 & 0.1 & 0.2 & 0.3 & 0.4 & 0.5\\
\hline
E(W) & 1.25 & 1.38 & 1.54 & 1.72 & 1.94 & 2.23\\
\hline
E(S) & 4 & 4.14 & 4.27 & 4.40 & 4.53 & 4.68\\
\hline
\end{tabular}
\caption{\label{tab: table 4.4} Table of $E(W)$ and $E(S)$ vs positive dependence}
\end{center}
\end{table}

\begin{table}[hbt!]
\begin{center}
\maketitle
\begin{tabular}{|c|c|c|c|c|c|}
\hline
$\theta$ & -0.1 & -0.2 & -0.3 & -0.4 & -0.5\\
\hline
E(W) & 1.12 & 1.00 & 0.89 & 0.77 & 0.64\\
\hline
E(S) & 3.83 & 3.64 & 3.42 & 3.14 & 2.81\\
\hline
\end{tabular}
\caption{\label{tab: table 4.5} Table of $E(W)$ and $E(S)$ vs negative dependence}
\end{center}
\end{table}

\begin{figure}[hbt!]
\centering
\caption{Line graphs of $E(W)$ and $E(S)$ vs dependency}
\label{Graph 4.1}
\includegraphics[scale=.7]{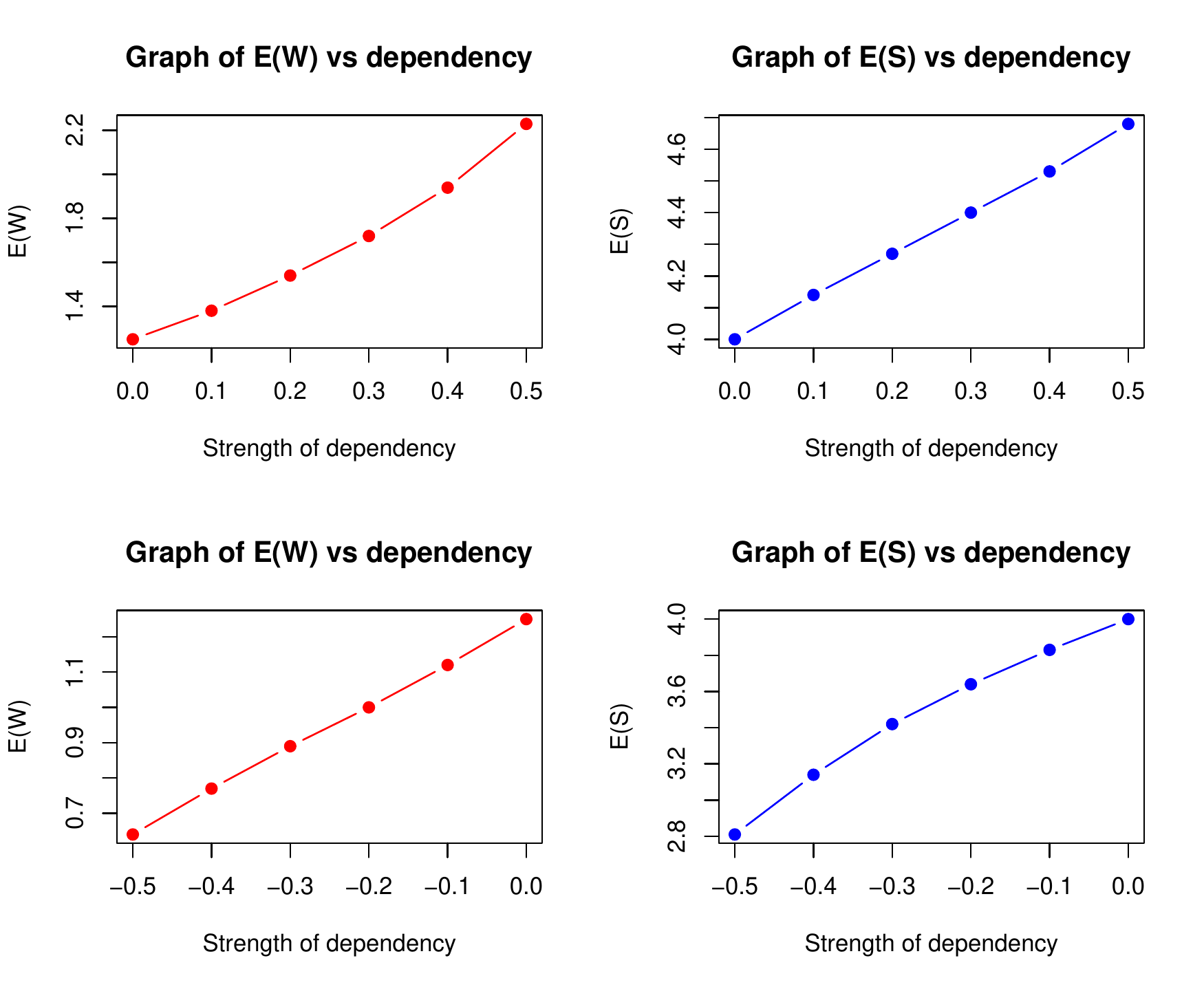}
\end{figure}
We observe that, for increasing positive dependency, the values of $E(W)$ and $E(S)$ also increase while for increasing negative dependency (decreasing value of $\theta$),  the values of $E(W)$ and $E(S)$ decrease.\\

\subsubsection{Service time is hypo exponential}
In this subsection, we estimate the mean waiting time and the mean service time for dependent service time and assume that the service time is hypo exponential distribution.

\begin{definition}
A continuous random variable $X$ is said to have hypo exponential distribution if its probability density function is defined as
\begin{equation}
f(x) =
\begin{cases}
\mu^2 x e^{-\mu x}, & \text{if $x \geq 0$}\\
0, & \text{ otherwise}\\
\end{cases}
\end{equation}
\end{definition}

and its distribution function is given as
$$ F(x) = 1 - e^{-\mu x} - \mu x e^{-\mu x}, \quad x \geq 0. $$
\break
\textbf{3.2.2.1 Evaluating $E(W)$ and $E(S)$} \\
When two replicas of a task launches to two servers using the cancel on finish policy, we have (see lemma 5.1),
$$ E(W) = \frac{\lambda E(S_{1:2}^2)}{2\big[1- \lambda E(S_{1:2})\big]} ~{} and ~{}
 E(S) = 2E(S_{1:2}),$$ where $S_{1:2} = \min(S_1, S_2) $. Now,
$$ P(S_{1:2} > x) = P(\min(S_1, S_2) > x) = P(S_1 > x, S_2 > x).$$
\break
\textbf{Case(i)} If service times $S_1$ and $S_2$ are independent, then
\begin{align*}
P(S_{1:2} > x) &= P(S_1 > x, S_2 > x)= P(S_1 > x)(S_2 > x), \\
&= (e^{-\mu x} + \mu x e^{-\mu x})^2 = e^{-2\mu x} + 2\mu x e^{-2\mu x}+ \mu^2 x^2 e^{-2\mu x}.
\end{align*}
Its distribution function is given by
\begin{align*}
P(S_{1:2} \leq x)= 1 - e^{-2\mu x} - 2\mu x e^{-2\mu x} - \mu^2 x^2 e^{-2\mu x}.
\end{align*}
and its probability density function is given by
\begin{align*}
f_{S_{1:2}}(x) = 2\mu^2 x e^{-2\mu x} + 2\mu^3 x^2 e^{-2\mu x}.
\end{align*}
Then, after calculations, we get
\begin{align*}
E(S_{1:2}) = \int_{0}^{\infty} x f_{S_{1:2}}(x)dx = \int_{0}^{\infty}(2\mu^2 x^2 e^{-2\mu x} + 2\mu^3 x^3 e^{-2\mu x})dx = \frac{5}{4\mu} ~{}~{}and
\end{align*}
\begin{align*}
E(S_{1:2}^2) = \int_{0}^{\infty} x^2 f_{S_{1:2}}(x)dx = \int_{0}^{\infty}(2\mu^2 x^3 e^{-2\mu x} + 2\mu^3 x^4 e^{-2\mu x})dx = \frac{9}{4\mu^2}.
\end{align*}
Then, the mean waiting time and the mean service time become
$$ E(W) = \frac{\lambda E(S_{1:2}^2)}{2\big[1- \lambda E(S_{1:2})\big]}= \frac{9\lambda}{2\mu(4\mu- 9\lambda)}~{}and ~{} E(S) = 2 \times E(S_{1:2}) = \frac{5}{2\mu}.$$
\break
\textbf{Example:}\\
If we choose $\lambda= 0.125$, $\mu= 0.5$, then $ E(W) =  0.818 ~{}~{}and ~{}~{} E(S) = \frac{5}{2\mu}= 5. $ \\
\break
\textbf{Case (ii)} When service times $S_1$ and $S_2$ are dependent, we have\\
$ P(S_{1:2} > x)= P(\min(S_1, S_2) > x)= P(S_1 > x, S_2 > x).$ \\
Let $u = F_{S_1}(x)= 1 - e^{-\mu x} - \mu x e^{-\mu x}$ and $v = F_{S_2}(x)= 1 - e^{-\mu x} - \mu x e^{-\mu x} $, and take $u + v \geq 1$.\\
Bound copula is given by \\
$C(u,v) = (1-\theta^2)uv + \frac{\theta}{4}\Big[(1+\theta)^2 \min(u,v) - (1-\theta)^2 \max(u+v-1,0)\Big]$, where $\theta \in [-1,1]$ is dependence parameter.\\
Now, the joint survival function of $S_1$ and $S_2$ is given by
\begin{align*}
P(S_{1:2} > x)&= P(S_1 > x, S_2 > x)= \bar{C}(1-u, 1-v) = 1 - u - v + C(u,v),
\end{align*}
Distribution function of $S_{1:2}$ is given by
\begin{align*}
P(S_{1:2}\leq x)&= F_{S_{1:2}}(x) = 2 - 2e^{-\mu x}- 2\mu x e^{-\mu x}- (1- \theta^2)(1- 2e^{-\mu x}\\
&\quad - 2\mu x e^{-\mu x}+ e^{-2\mu x}+ 2\mu x e^{-2\mu x}+ \mu^2 x^2 e^{-2\mu x})-\frac{\theta}{4}(1+\theta)^2(1 \\
&\quad -e^{-\mu x} - \mu x e^{-\mu x})+ \frac{\theta}{4}(1-\theta)^2(1-2e^{-\mu x}- 2\mu x e^{-\mu x}).
\end{align*}
and the density function of $S_{1:2}$ is given by
\begin{align*}
f_{S_{1:2}}(x) &= 2\mu^2 x e^{-\mu x}-(1- \theta^2)(2\mu^2 x e^{-\mu x}- 4\mu^2 x e^{-2\mu x}+ 2\mu^2 x e^{-2\mu x}\\
&\quad - 2\mu^3 x^2 e^{-2\mu x})- \frac{\theta(1+\theta)^2}{4}\mu^2 x e^{-\mu x}+ \frac{\theta(1-\theta)^2}{2}\mu^2 x e^{-\mu x}.
\end{align*}
Then, the mean of $S_{1:2}$ is given by
\begin{align*}
E(S_{1:2})&= \int_{0}^{\infty} x f_{S_{1:2}}(x)dx, \\
&= \int_{0}^{\infty}[2\mu^2 x^2 e^{-\mu x}-(1- \theta^2)(2\mu^2 x^2 e^{-\mu x}- 4\mu^2 x^2 e^{-2\mu x}+ 2\mu^2 x^2 e^{-2\mu x} \\
&\quad - 2\mu^3 x^3 e^{-2\mu x})- \frac{\theta}{4}(1+\theta)^2 \mu^2 x^2 e^{-\mu x}+ \frac{\theta}{2}(1-\theta)^2 \mu^2 x^2 e^{-\mu x}]dx.
\end{align*}
After calculations, we get
$$ E(S_{1:2})= \frac{4}{\mu}- (1- \theta^2)\left[\frac{4}{\mu}- \frac{1}{\mu}+ \frac{1}{2\mu}- \frac{0.75}{\mu}\right]- \frac{\theta(1+\theta)^2}{2\mu}+ \frac{\theta(1-\theta)^2}{\mu}.$$
We take $\mu = 0.5$, we then have, $ E(S_{1:2})= 2.5 + \theta - 0.5\theta^2 + \theta^3.$ \\
The second moment of $S_{1:2}$ is given by
\begin{align*}
E(S_{1:2}^2)&= \int_{0}^{\infty} x^2 f_{S_{1:2}}(x)dx,\\
&= \int_{0}^{\infty}\Big[2\mu^2 x^3 e^{-\mu x}- (1-\theta^2)(2\mu^2 x^3 e^{-\mu x}- 4\mu^2 x^3 e^{-2\mu x}+ 2\mu^2 x^3 e^{-2\mu x}\\
&\quad - 2\mu^3 x^4 e^{-2\mu x})-\frac{\theta(1+\theta)^2}{4} \mu^2 x^3 e^{-\mu x}+ \frac{\theta(1-\theta)^2}{2} \mu^2 x^3 e^{-\mu x}\Big]dx.
\end{align*}
After calculations, we get
$$ E(S_{1:2}^2)= \frac{12}{\mu^2} - (1-\theta^2)\Big( \frac{12}{\mu^2}- \frac{3}{\mu^2}+  \frac{3}{4\mu^2}- \frac{3}{2\mu^2}\Big)- \frac{3\theta(1+\theta)^2}{2\mu^2}+ \frac{3\theta(1-\theta)^2}{\mu^2}.$$
\break
\textbf{Example:}\\

If $\mu = 0.5$, $ E(S_{1:2}^2)= 9 + 6\theta + 3\theta^2 + 6\theta^3. $ Then, the mean waiting time and mean service time are given by
$$ E(W) = \frac{\lambda E(S_{1:2}^2)}{2\big(1- \lambda E(S_{1:2})\big)}= \frac{0.125(9+6\theta+3\theta^2+6\theta^3)}{2-0.25(2.5+\theta -0.5\theta^2+ \theta^3)}, $$
and $$ E(S) = 2E(S_{1:2})= 5+2\theta-\theta^2 + 2\theta^3. $$
The values of $E(W)$ and $E(S)$ for positive and negative strengths of dependencies are shown in tables (6)and (7). Also, the line graphs of $E(W)$ and $E(S)$ for different strength of dependencies are shown in figure (4).

\begin{figure}[hbt!]
\centering
\caption{Line graphs of $E(W)$ and $E(S)$ vs dependency}
\label{Graph 4.4}
\includegraphics[scale=.85]{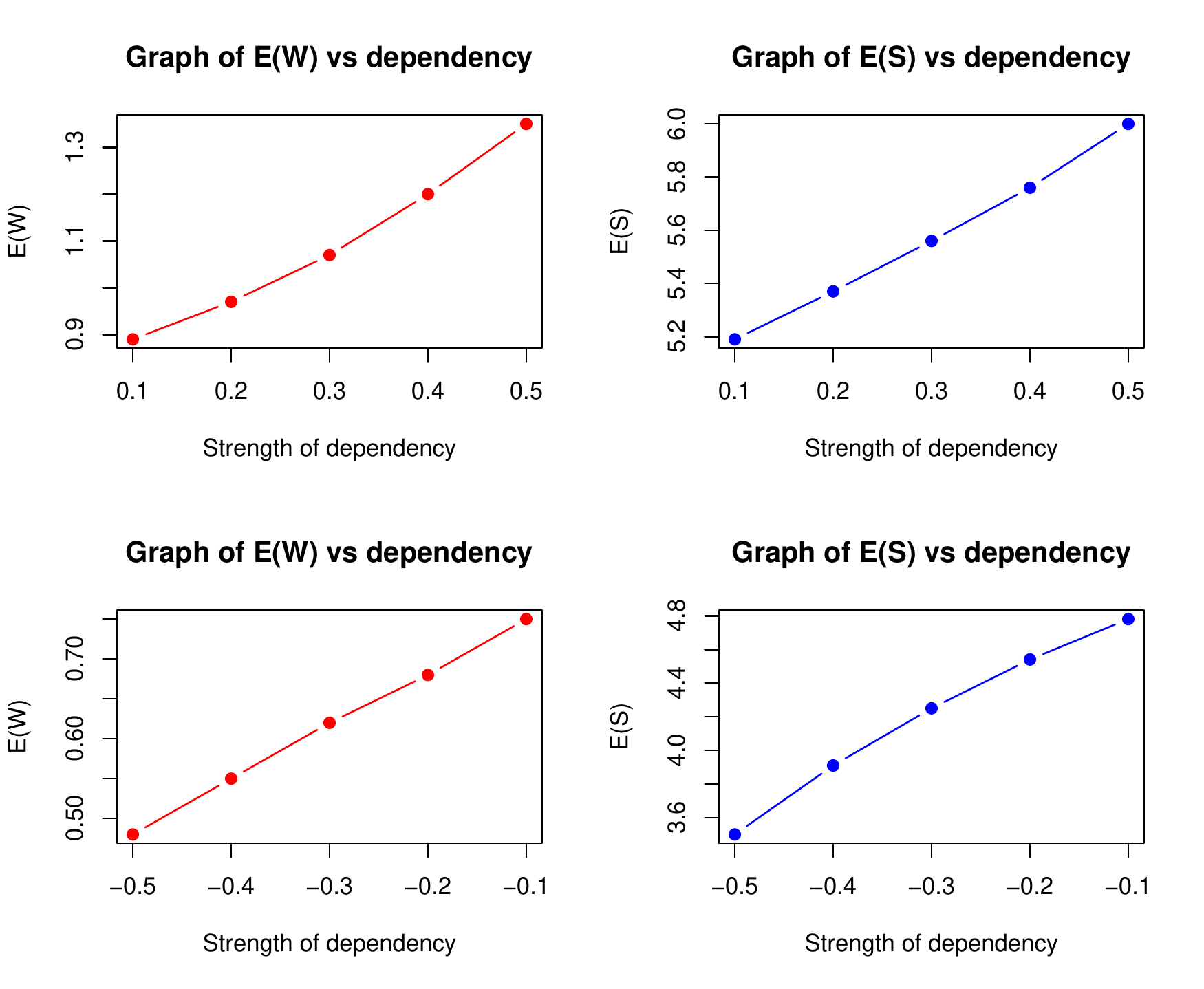}
\end{figure}

\begin{table}[hbt!]
\begin{center}
\maketitle
\begin{tabular}{|c|c|c|c|c|c|}
\hline
$\theta$ & 0.1 & 0.2 & 0.3 & 0.4 & 0.5\\
\hline
E(W) & 0.89 & 0.97 & 1.07 & 1.20 & 1.35\\
\hline
E(S) & 5.19 & 5.37 & 5.56 & 5.76 & 6\\
\hline
\end{tabular}\\
\caption{\label{tab: table 4.9} Table of $E(W)$ and $E(S)$ vs positive dependence}
\end{center}
\end{table}

\begin{table}[hbt!]
\begin{center}
\maketitle
\begin{tabular}{|c|c|c|c|c|c|}
\hline
$\theta$ & -0.1 & -0.2 & -0.3 & -0.4 & -0.5\\
\hline
E(W) & 0.75 & 0.68 & 0.62 & 0.55 & 0.48\\
\hline
E(S) & 4.78 & 4.54 & 4.25 & 3.91 & 3.50\\
\hline
\end{tabular}\\
\caption{\label{tab: table 4.10}Table of $E(W)$ and $E(S)$ vs negative dependence}
\end{center}
\end{table}

We observe that for positive dependencies, as measures of dependency increases between service times, the values of $E(W)$ and $E(S)$ also increase and for negative measures of dependency, values of $E(W)$ and $E(S)$ decrease.

\section{Conclusion}
In Section 2, we constructed two new copulas, the bound  copula and a perturbed copula. We also derived measures of dependency such as Spearman's rho, Kendall's tau, Blomqvist's beta and Gini's gamma along with coefficients of the upper tail and the lower tail dependencies.

For application, in Section 3, we estimated the mean waiting time and the mean service time  when two replicas of a task launch to two parallel dependent servers. We have shown that for positive dependency between service times, when the strength of dependency increases, the mean waiting time and the mean service time also increase whereas for the negative dependent case, they decrease. To generalize the result, we have chosen two different service times and got the similar results for both cases. To analyze the dependency between the service times, we have used the newly constructed bound copula, which is suitable for our application for analyzing service times. \\
\break
\textbf{Acknowledgement}

The authors would like to acknowledge that this work is supported, in part, by the Natural Sciences and Engineering Research Council of Canada (NSERC) through a Discovery Research Grant, and Carleton University.\\
\break
\textbf{Appendix}\\
\break
\textbf{Proof of theorem 2.1}

For $\theta=0 $, we have,  $C(u,v)=uv$, which implies independence.
When $\theta=1$, simple calculation leads to
\begin{align*}
C(u,v) = \frac{1}{4}\left[2^2 \min(u,v) - 0\right]= \min(u,v).
\end{align*}
Similarly, when $\theta = -1$, it becomes
\begin{align*}
C(u,v) = -\frac{1}{4}\left[-4\max(u+v-1,0)\right] = \max(u+v-1, 0).
\end{align*}
We now need to show for all $\theta \in [-1,1]$, $C(u,v)$  satisfies all properties of a copula.
\begin{description}
\item[(i)] Boundary conditions:
\begin{align*}
C(u,0)&= \frac{\theta}{4}[(1+\theta)^2.0 - (1-\theta)^2 \cdot 0]= 0 = C(0,v),\\
C(u,1) &= (1-\theta ^2)u + \frac{\theta}{4}[(1+\theta)^2 \min(u,1) - (1-\theta)^2 \max(u+1-1,0)] \\
&= (1-\theta ^2)u + \frac{\theta}{4}[(1+\theta)^2.u - (1-\theta)^2 \cdot u] = u - \theta^2 u + \theta^2 u = u.
\end{align*}
Similarly, $ C(1,v) = v. $

\item[(ii)] 2-increasing property: For all $0 \leq u_1 \leq u_2 \leq 1$, $0 \leq v_1 \leq v_2 \leq 1,$
\end{description}
 $C(u_1, v_1) + C(u_2,v_2) - C(u_1,v_2)-C(u_2, v_1)$
\begin{align}
&=(1-\theta^2)u_1 v_1 + \frac{\theta}{4}\Big[(1+\theta)^2 \min(u_1,v_1)-(1-\theta)^2 \max(u_1+v_1-1,0)\Big]+ (1-\theta^2)\nonumber \\
&\quad u_2 v_2 + \frac{\theta}{4}\Big[(1+\theta)^2 \min(u_2,v_2) -(1-\theta)^2 \max(u_2+v_2-1,0)\Big]-(1-\theta^2)u_1 v_2 \nonumber \\
&\quad - \frac{\theta}{4}\Big[(1+\theta)^2 \min(u_1,v_2) - (1-\theta)^2 \max(u_1+v_2-1,0)\Big]-(1-\theta^2)u_2 v_1 \nonumber\\
&\quad - \frac{\theta}{4}\Big[(1+\theta)^2 \min(u_2,v_1) - (1-\theta)^2 \max(u_2+v_1-1,0)\Big]\nonumber \\
&= (1-\theta^2)(u_2-u_1)(v_2-v_1) + \frac{\theta}{4}\big(1+\theta\big)^2\Big[\min(u_1,v_1) + \min(u_2,v_2)- \min(u_1,v_2)\nonumber \\
&\quad - \min(u_2,v_1)\Big]- \frac{\theta}{4}(1 -\theta)^2\Big[\max(u_1+v_1-1, 0) + \max(u_2+ v_2-1, 0)\nonumber \\
&\quad -\max(u_1+v_2-1, 0)- \max(u_2+v_1-1, 0)\Big].
\end{align}
(a) For $u \leq v$, $u + v \leq 1$, that is, for $u_1 \leq v_1$, $u_1 + v_1 \leq 1$, $u_2 \leq v_2$, $u_2 + v_2 \leq 1$,\\
the expression (4.1) becomes \\ $C(u_1, v_1) + C(u_2,v_2) - C(u_1,v_2)-C(u_2, v_1)$
\begin{align*}
&=(1-\theta^2)(u_2-u_1)(v_2-v_1) + \frac{\theta}{4}\big(1+\theta\big)^2\Big[u_1 + u_2 - u_1 - \min(u_2, v_1)\Big] - \frac{\theta}{4}\big(1-\theta\big)^2 \cdot 0] \\
&= (1-\theta^2)(u_2-u_1)(v_2-v_1) + \frac{\theta}{4}\big(1+\theta\big)^2\Big[u_2 - \min(u_2,v_1)\Big].
\end{align*}
If $u_2 \leq v_1$, then $u_2 - \min(u_2,v_1) = u_2 - u_2 = 0.$ \\
So, the above expression becomes
$(1-\theta^2)(u_2-u_1)(v_2-v_1) \geq 0$, $-1 \leq \theta \leq 1$, $u_2 \geq u_1$ and  $v_2 \geq v_1.$ \\
If $u_2 > v_1$, then
$$ \Big[(1-\theta^2)(u_2-u_1)(v_2-v_1) + \frac{(1 + \theta)(\theta^2 + \theta)}{4}(u_2 - v_1)\Big] \geq 0,$$
since  $1 + \theta \geq 0$,~{} $\theta^2 + \theta \geq 0$ ~{}and~{} $1-\theta^2 \geq 0$ for $ -1 \leq \theta \leq 1. $ \\

(b) For $u \leq v$, $ u + v > 1$, that is, for $u_1 \leq v_1$, $u_1 + v_1 > 1$, $ u_2 \leq v_2$ and  $u_2 + v_2 > 1$, expression (4.1) becomes \\
$C(u_1, v_1) + C(u_2,v_2) - C(u_1,v_2)-C(u_2, v_1)$
\begin{align*}
&=(1-\theta^2)(u_2-u_1)(v_2-v_1) + \frac{\theta}{4}(1+\theta)^2\Big[u_1 + u_2 - u_1 - \min(u_2, v_1)\Big] \\
&\quad -\frac{\theta}{4}(1-\theta)^2\Big[u_1+v_1-1+u_2+v_2-1-u_1-v_2+1-u_2-v_1+1\Big] \\
&=(1-\theta^2)(u_2-u_1)(v_2-v_1) + \frac{\theta}{4}(1+\theta)^2\Big[u_2 -\min(u_2,v_1)\Big] \geq 0.
\end{align*}
(c) For $u>v$, $u+v \leq 1$, that is, for $u_1 > v_1$, $u_1 + v_1 \leq 1$, $u_2 > v_2$ and $u_2 + v_2 \leq 1,$ expression \\
(4.1) becomes \\
$C(u_1, v_1) + C(u_2,v_2) - C(u_1,v_2)-C(u_2, v_1)$
\begin{align*}
&=(1-\theta^2)(u_2-u_1)(v_2-v_1) + \frac{\theta}{4}(1+\theta)^2\Big[v_1 + v_2 - \min(u_1, v_2)-v_1\Big] - \frac{\theta}{4}(1-\theta)^2 \cdot 0 \\
&= (1-\theta^2)(u_2-u_1)(v_2-v_1) + \frac{(1 + \theta)(\theta^2 + \theta)}{4}\Big[v_2 -\min(u_1,v_2)\Big] \geq 0.
\end{align*}
Since $v_2-\min(u_1, v_2)\geq 0$, $1 + \theta \geq 0$, $\theta^2 + \theta \geq 0$,  $1-\theta^2 \geq 0$ for $ -1 \leq \theta \leq 1.$ \\

(d) For $u > v$, $u + v > 1 $, that is, for $u_1 > v_1$ and $u_1 + v_1 > 1$, the expression (4.1)\\ becomes

$C(u_1, v_1) + C(u_2,v_2) - C(u_1,v_2)-C(u_2, v_1)$
\begin{align*}
&= (1-\theta^2)(u_2-u_1)(v_2-v_1) + \frac{\theta}{4}(1+\theta)^2\Big[v_1 + v_2 - \min(u_1, v_2)- v_1\Big]\\
&\quad - \frac{\theta}{4}(1-\theta)^2\Big[u_1+v_1-1+u_2+v_2-1-u_1-v_2+1-u_2-v_1+1\Big] \\
&= \Big[(1-\theta^2)(u_2-u_1)(v_2-v_1) + \frac{(1 + \theta)(\theta^2 + \theta)}{4}\big(v_2 -\min(u_1, v_2\big)\Big] \geq 0,\\
& since ~{}~{} [v_2 - \min(u_1,v_2)] \geq 0.
\end{align*}
So, $C(u,v)$ is 2-increasing which implies that
$$C(u,v) = (1-\theta^2)uv + \frac{\theta}{4}\Big[(1+\theta)^2 \min(u,v) - (1-\theta)^2 \max(u+v-1,0)\Big]$$ is a copula.\\
\break
\textbf{Proof of theorem 2.2}\\

(i) By the definition of Spearman's rho, we have
\begin{align*}
\rho(C)&= 12 \int_0^1\int_0^1 C(u,v)\quad dudv - 3 \\
&= 12 \int_0^1\int_0^1\left[(1-\theta^2)uv + \frac{\theta}{4}\left((1+\theta)^2 \min(u,v) - (1-\theta)^2 \max(u+v-1,0)\right)\right]dudv - 3.
\end{align*}
\begin{align*}
\text{Since} \quad \int_0^1\int_0^1 uv dudv = \frac{1}{4}, \quad \int_0^1\int_0^1 \min(u,v)dudv = \frac{1}{3} \quad \text{and} \quad \int_0^1\int_0^1 \max(u+v-1,0)\quad dudv &= \frac{1}{6},
\end{align*}
\begin{align*}
\rho(C) &= 12\Big[(1-\theta^2)\frac{1}{4} + \frac{\theta}{4}\Big((1+\theta)^2 \frac{1}{3} - (1-\theta)^2 \frac{1}{6}\Big)\Big] - 3 = \frac{\theta(\theta^2 + 1)}{2}.
\end{align*}

(ii) By the definition of Kendall's tau, we have
\begin{equation}
\tau(C) = 1 - 4\int_0^1\int_0^1 \frac{\partial C}{\partial u}\cdot \frac{\partial C}{\partial v} dudv.
\end{equation}
We have, $$C(u,v) = (1- \theta^2)uv + \frac{\theta}{4}\Big[(1+\theta)^2 \min(u,v) - (1-\theta)^2 \max(u+v-1,0)\Big].$$
Then,
$$ \frac{\partial C}{\partial u} = (1- \theta^2)v + \frac{\theta(1+\theta)^2}{4} I_{[u\leq v]} - \frac{\theta(1-\theta)^2}{4} I_{[u\geq 1-v]},$$
and
$$\frac{\partial C}{\partial v} = (1- \theta^2)u + \frac{\theta(1+\theta)^2}{4}I_{[u > v]} - \frac{\theta(1-\theta)^2}{4} I_{[u\geq 1-v]}.$$
So,
\begin{align}
\frac{\partial C}{\partial u}\cdot \frac{\partial C}{\partial v}&= (1- \theta^2)^{2} uv + \frac{\theta(1+\theta)^2 (1-\theta^2)}{4} v I_{[u > v]}- \frac{\theta(1- \theta)^2 (1-\theta^2)}{4} I_{[u \geq 1-v]}\nonumber \\
&\quad + \frac{\theta(1+\theta)^2 (1-\theta^2)}{4}  u I_{[u \leq v]} + \frac{\theta^2(1+\theta)^4}{16} I_{[u \leq v]} I_{[u > v]} \nonumber \\
&\quad - \frac{\theta^2(1+\theta)^2 (1-\theta)^2}{16} I_{[u \leq v]}I_{[u \geq 1-v]} - \frac{\theta(1-\theta)^2 (1-\theta^2)}{4} u I_{[u \geq  1-v]} \nonumber \\
&\quad - \frac{\theta^2(1+\theta)^2 (1-\theta)^2}{16} I_{[u \geq v]}I_{[v \geq 1-u]}+ \frac{\theta^2(1-\theta)^4}{16} I_{[u \geq 1-v]}I_{[u \geq 1-v]}.
\end{align}
We have,
\begin{align}
\int_0^1\int_0^1 uv~{}dudv &= \frac{1}{4},~{}~{}\int_0^1\int_0^1 v I_{[u \geq v]}~{}dudv = \int_0^1\int_v^1 v~{} du dv = \frac{1}{6},\\
\int_0^1\int_0^1 v I_{[u \geq 1-v]}~{} dudv &= \int_0^1\int_{1-v}^1 v ~{} du dv  = \frac{1}{3},~{}~{}~{}\int_0^1\int_0^1 u I_{[u \leq v]}~{}dudv= \int_0^1\int_0^v u ~{}du dv = \frac{1}{6},
\end{align}
\begin{align}
\int_0^1\int_0^1 I_{[u \leq v]} I_{[u > v]}~{} dudv &= 0,~{}~{}\int_0^1\int_0^1 I_{[u \leq v]} I_{[u \geq 1-v]}~{}dudv = \int_0^1\int_{1-v}^v~{}du dv = 0,\\
\int_0^1\int_0^1 u I_{[u \geq 1-v]} dudv &= \int_0^1\int_{1-v}^1 u ~{}du dv = \frac{1}{3},~{}~{}\int_0^1\int_0^1 I_{[u \geq v]} I_{[u \geq 1-v]}~{}dv du=\\
&\quad \int_0^1\int_{1-u}^u dv du = 0, ~{}~{} and ~{}~{} \int_0^1\int_0^1 I_{[u \geq 1-v]} dudv = \int_0^1\int_{1-v}^1 ~{} du dv  = \frac{1}{2}.
\end{align}
Using the equations (4.3) to (4.8) in (4.2), equation (4.2) becomes
\begin{align*}
\tau(C)&= 1 - 4\left[\frac{(1-\theta^2)^2}{4} + \frac{\theta(1-\theta^2)(1+ \theta)^2}{24}-  \frac{\theta(1-\theta^2)(1- \theta)^2}{12}\right]  \\
&\quad - 4\left[\frac{\theta(1-\theta^2)(1+ \theta)^2}{24}- \frac{\theta(1-\theta^2)(1- \theta)^2}{12}+ \frac{\theta^2 (1- \theta)^4}{32}\right]\\
\tau(C)&= 1- 4\left[\frac{(1-\theta^2)^2}{4}+ \frac{\theta(1-\theta^2)(1+ \theta)^2}{12}- \frac{\theta(1-\theta^2)(1- \theta)^2}{6} + \frac{\theta^2 (1- \theta)^4}{32}\right].
\end{align*}
After calculations, we get
\begin{equation}
\tau(C) = \frac{\theta(8 -3\theta + 12\theta^2 + 6\theta^3 + 4\theta^4 - 3\theta^5)}{24}.
\end{equation}
\break
\textbf{Note:} In equation 4.9, when the measures of dependency ($\theta$) are -1 and 1, the  Kendall's tau ($\tau(C)$) will be -1 and 1 respectively. \\

(iii) By the definition of Blomqvist's beta, we have
\begin{align*}
\beta(C) &= 4 C\left(\frac{1}{2},\frac{1}{2}\right)- 1.
\end{align*}
\begin{align*}
\textrm{Since} \quad C\left(\frac{1}{2},\frac{1}{2}\right) &= (1- \theta^2)\frac{1}{4} + \frac{\theta}{4}\left[(1+\theta)^2 \frac{1}{2} - 0\right] = (1- \theta^2)\frac{1}{4} + \frac{\theta(1+\theta)^2}{8},
\end{align*}
the Blomqvist's beta becomes
$$\beta(C) = 4\left[(1- \theta^2)\frac{1}{4} + \frac{\theta(1+\theta)^2}{8}\right]- 1 = \frac{\theta(\theta^2 + 1)}{2}\cdot$$

(iv) By the definition of Gini's gamma, we have
\begin{align*}
\gamma(C) &= 4\int_0^1[C(u,u) + C(u,1-u) - u]du, \quad \textrm{where} \\
C(u,u) &= (1-\theta^2)u^2 + \frac{\theta}{4}\left[(1+\theta)^2 u - (1-\theta)^2 \max(2u-1,0)\right], \quad \textrm{and}\\
\int_0^1 C(u,u)du &=  \int_0^1\left[(1-\theta^2)u^2 + \frac{\theta}{4}((1+\theta)^2 u - (1-\theta)^2 \max(2u-1,0))\right]du.
\end{align*}
After calculations, we get
\begin{align*}
\int_0^1 C(u,u)du &= (1-\theta ^2)\frac{1}{3}+ \frac{\theta}{4}\left[(1+\theta)^2 \frac{1}{2} - (1-\theta)^2 \frac{1}{4}\right] \\
&= \frac{3\theta^3 + 2\theta^2 + 3\theta +16}{48}.
\end{align*}
Next,\\
$$\int_0^1 C(u,1-u)du =  \int_0^1\left[(1-\theta^2)u(1-u) + \frac{\theta}{4}((1+\theta)^2 min(u,1-u) - (1-\theta)^2\cdot 0)\right]du $$
Since,$$\int_0^1(u - u^2)du = \frac{1}{6}, \int_0^1 min(u,1-u)du = \frac{1}{4},$$
$$\int_0^1 C(u,1-u)du = (1-\theta^2)\frac{1}{6} + \frac{\theta}{4}(1+ \theta)^2 \frac{1}{4}= \frac{3\theta^3-2\theta^2+3\theta + 8}{48},$$
we get,
\begin{align*}
\gamma(C) &= 4\left(\frac{3\theta^3 + 2\theta^2 + 3\theta +16}{48}+ \frac{3\theta^3-2\theta^2+3\theta + 8}{48}- \frac{1}{2}\right)\\
&= 4\left(\frac{6\theta^3 + 6\theta}{48}\right) = \frac{\theta(\theta^2 + 1)}{2}.
\end{align*}
\break
\textbf{Proof of theorem 2.3:}

The coefficient of the lower tail dependence is
$$ \lambda_L = \lim_{u \to 0}\frac{C(u,u)}{u}= \lim_{u \to 0} \frac{(1-\theta^2)u^2 + \frac{\theta}{4}(1+\theta)^2 u}{u}$$
$$= \lim_{u \to 0}\left[(1-\theta^2)u + \frac{\theta}{4}(1+\theta)^2\right]= \frac{\theta(1+\theta)^2}{4}.$$
And the coefficient of the upper tail dependence is
\begin{align*}
\lambda_U &= \lim_{u \to 1}\frac{1-2u + C(u,u)}{1-u} \\
&= \lim_{u \to 1}\left[\frac{1-2u + (1-\theta^2)u^2 + \frac{\theta}{4}((1+\theta)^2 u - (1-\theta)^2(2u-1))}{1-u}\right].
\end{align*}
Since this is of the $\frac{0}{0}$ form, we can use L'Hospital's rule to get,
\begin{align*}
\lambda_U &= \lim_{u \to 1} \left[\frac{-2 + 2(1-\theta^2)u + \frac{\theta}{4}((1+\theta)^2 - (1-\theta)^2\cdot 2)}{-1} \right] \\
&= -1\left[-2 + 2(1-\theta^2)+ \frac{\theta}{4}(1+2\theta+\theta^2-2+4\theta-2\theta^2)\right] \\
&= \frac{\theta^3 + 2\theta^2 + \theta}{4}= \frac{\theta(1+\theta)^2}{4}.
\end{align*}
\break
\textbf{Proof of proposition 2.1:}\\
\begin{description}
\item[Case(i)]  If $u \leq v$, $u + v \leq 1,$ \\
 $M(u,v)- W(u,v)$ = $\min(u,v) - \max(u+v-1, 0)$ \\
 $ = u - 0 = u.$
\item[Case(ii)] If $u \leq v$, $u + v > 1,$ \\
 $M(u,v)- W(u,v)$ = $\min(u,v) - \max(u+v-1, 0)$\\
 $ = u - (u+v-1) = 1 - v. $
\item[Case(iii)] If $u > v$, $u + v \leq 1,$ \\
 $M(u,v)- W(u,v)$ = $\min(u,v) - \max(u+v-1, 0)$ \\
 $ = v - 0 = v.$
\item[Case(iv)] If $u \leq v$, $u + v \leq 1,$ \\
 $M(u,v)- W(u,v)$ = $\min(u,v) - \max(u+v-1, 0)$\\
 $ = v - (u + v- 1)= 1 - u. $
\end{description}
\break
\textbf{Proof of theorem 2.4:}

To prove $C_{\alpha}(u,v)$ is a copula, we need to show that it satisfies all properties of a copula.
\begin{description}
\item[(i)] Boundary conditions: We have,
\begin{align*}
C_{\alpha}(u,0)&= C(u,0) +  \frac{\alpha}{2}\Big[\min(u,0) - \max(u-1,0)\Big]
 = 0 + \frac{\alpha}{2}(0-0) = 0, \\
  C_{\alpha}(0,v)&= C(0,v) +  \frac{\alpha}{2}\Big[\min(0,v) - \max(v-1,0)\Big]= 0 + \frac{\alpha}{2}(0-0) = 0, \\
  C_{\alpha}(1,v)&= C(1,v) +  \frac{\alpha}{2}\Big[\min(1,v) - \max(v,0)\Big] = v + \frac{\alpha}{2}(v-v) = v, \\
 C_{\alpha}(u,1)&= C(u,1) +  \frac{\alpha}{2}\Big[\min(u,1) - \max(u,0)\Big] = u + \frac{\alpha}{2}(u-u) = u.
 \end{align*}
 \item[(ii)]2-increasing property: For all $0\leq u_1 \leq u_2 \leq 1$,  $0 \leq v_1 \leq v_2 \leq 1$,
 \end{description}
 $ C_{\alpha}(u_1,v_1) + C_{\alpha}(u_2,v_2) - C_{\alpha}(u_1,v_2) + C_{\alpha}(u_2,v_1)$
\begin{align}
 &= C(u_1, v_1)+ \frac{\alpha}{2}\Big[\min(u_1,v_1)-\max(u_1+ v_1-1, 0)\Big] + C(u_2, v_2)+ \frac{\alpha}{2}\Big[\min(u_2,v_2)\nonumber \\
 &\quad -\max(u_2+ v_2-1, 0)\Big] - C(u_1, v_2)- \frac{\alpha}{2}\Big[\min(u_1,v_2)-\max(u_1+ v_2-1, 0)\Big] \nonumber \\
 &\quad - C(u_2, v_1)- \frac{\alpha}{2}\Big[\min(u_2,v_1)-\max(u_2+ v_1-1, 0)\Big]\nonumber  \\
 &= C(u_1, v_1) + C(u_2, v_2) - C(u_1, v_2) - C(u_2, v_1) +  \frac{\alpha}{2}\Big[\min(u_1,v_1)+ \min(u_2,v_2)\nonumber \\
 &\quad - \min(u_1,v_2)- \min(u_2,v_1)-\max(u_1+ v_1-1, 0)- \max(u_2+ v_2-1, 0) \nonumber \\
 &\quad +  \max(u_1+ v_2-1, 0)+ \max(u_2+ v_1-1, 0)\Big]
\end{align}
Since $C(u,v)$ is a copula, $C(u_1, v_1) + C(u_2, v_2) - C(u_1, v_2) - C(u_2, v_1) \geq 0,$ \\
 For the second part of expression (4.10), consider all four cases for $u$ and $v$,
 \begin{description}
 \item[(a)] For $ u \leq v$, $u + v \leq 1$, that is for  $ u_1 \leq v_1$,  $u_1 + v_1 \leq 1$ and $ u_2 \leq v_2$, $u_2 + v_2 \leq 1$, the second part of (3.4) will be \\
 $\frac{\alpha}{2}\Big[u_1 + u_2 -u_1 - \min(u_2, v_1)- 0 -0 + 0 + 0\Big]$\\
 $= \frac{\alpha}{2}\Big[u_2 - \min(u_2, v_1)\Big] \geq 0. $
\item[(b)] For $ u \leq v$, $u + v > 1$, that is for  $ u_1 \leq v_1$, $u_1 + v_1 > 1$ and $ u_2 \leq v_2$ and  $u_2 + v_2 > 1$, the second part of (3.4) will be $\frac{\alpha}{2}\Big[u_1 + u_2 -u_1 - \min(u_2, v_1)- (u_2 + v_2 - 1)- (u_1 + v_1 - 1)+ (u_1 + v_2 - 1)+(u_2 + v_1 - 1)\Big]$ \\
 $= \frac{\alpha}{2}\Big[u_2 -u_1 + u_2 - \min(u_2, v_1)\Big] \geq 0, $\\
 since $u_2 - u_1 \geq 0 $ and $u_2 - \min(u_2,v_1) \geq 0.$
\item[(c)] For $ u > v$, $u + v \leq 1,$ that is for  $ u_1 > v_1$, $u_1 + v_1 \leq 1$ and $u_2 > v_2$, $u_2 + v_2 \leq 1$, the second part of (3.4) will be $\frac{\alpha}{2}\Big[v_1 + v_2 - \min(u_1, v_2)- v_1 -0 -0 + 0 + 0\Big]$ \\
 $= \frac{\alpha}{2}\Big[v_2 - \min(u_1, v_2)\Big] \geq 0. $
\item[(d)] For $ u > v$, $u + v > 1,$ that is for  $ u_1 > v_1$, $u_1 + v_1 > 1$ and $ u_2 > v_2$, $u_2 + v_2 > 1$, the second part of (3.4) will be $\frac{\alpha}{2}\Big[v_1 + v_2 - \min(u_1, v_2)- v_1 - (u_1 + v_1 - 1)- (u_2 + v_2 - 1)+ (u_1 + v_2 - 1)+(u_2 + v_1 - 1)\Big]$ \\
 $= \frac{\alpha}{2}\Big[v_2 - \min(u_1, v_2)\Big] \geq 0. $
\end{description}
Hence, $C_{\alpha}(u,v)$ is 2-increasing. \\
So, $C_{\alpha}(u,v)= C(u,v) + \frac{\alpha}{2}\Big[\min(u,v) - \max(u+v-1, 0)\Big]$ is a copula.
\break
\textbf{Proof of theorem 2.5:}

By the definition of Spearman's rho,
\begin{align*}
\rho(C_{\alpha}) &= 12\int_0^1\int_0^1 C_{\alpha}(u,v)du dv - 3 \\
&= 12\int_0^1\int_0^1 \Big[C(u,v) + \frac{\alpha}{2}\Big(\text{M}(u,v) - \text{W}(u,v)\Big)\Big]du dv - 3 \\
&= 12\int_0^1\int_0^1 C(u,v)dudv - 3 + 12\int_0^1\int_0^1 \frac{\alpha}{2}\Big[\text{M}(u,v) - \text{W}(u,v)\Big]du dv \\
&= \rho(C) + 6\alpha \int_0^1\int_0^1 \Big[\text{M}(u,v) - \text{W}(u,v)\Big]du dv
\end{align*}
where
\begin{align*}
\text{M}(u,v) dudv &= \int_0^1\int_0^1 \min(u,v)\quad du dv \\
&= \int_0^1\int_0^1 u I_{u \leq v} dudv + \int_0^1\int_0^1 v I_{u > v}  dudv\\
&= \int_0^1\int_0^v u dudv + \int_0^1\int_v^1 v  dudv \\
&= \int_0^1\left[\frac{u^2}{2}\right]_{0}^{v} dv +  \int_0^1 v[u]_{v}^{1}dv
= \int_0^1 \frac{v^2}{2}dv +  \int_0^1 v(1-v) dv \\
&= \left[\frac{v^3}{6}\right]_{0}^{1} + \left[\frac{v^2}{2} - \frac{v^3}{3}\right]_{0}^{1} = \frac{1}{6} + \frac{1}{2} - \frac{1}{3} = \frac{1}{3}.
\end{align*}
and
\begin{align*}
\int_0^1\int_0^1 \text{W}(u,v)\quad dudv &= \int_0^1\int_0^1 \max(u+v-1, 0)\quad dudv \\
& = \int_0^1\int_0^1(u+v-1)I_{u \geq v-1} du dv = \int_0^1\int_{1-v}^1 (u+v-1) dudv \\
& = \int_0^1 \left[\frac{u^2}{2} + vu - u\right]_{1-v}^1 dv\\
& = \int_0^1\left[\frac{1}{2} + v -1 - \frac{(1-v)^2}{2} -v(1-v) + 1 - v\right]dv \\
& = \left[\frac{v}{2} + \frac{(v-1)^3}{6} - \frac{v^2}{2} + \frac{v^3}{3}\right]_{0}^{1} \\
&= \frac{1}{2}- \frac{1}{2} + \frac{1}{3}+ \frac{(-1)^3}{6}= \frac{1}{6}.
\end{align*}
Hence, \\
$\rho(C_{\alpha})= \rho(C) + 6\alpha(\frac{1}{3} - \frac{1}{6})= \rho(C) + \alpha $ where $\alpha \in [-1,1]. $ \\

By the definition of Blomqist's beta,
$$ \beta(C_{\alpha}) = 4 C_{\alpha}\left(\frac{1}{2},\frac{1}{2}\right)- 1.$$

\begin{align*}
&= 4\left[C\left(\frac{1}{2},\frac{1}{2}\right) + \frac{\alpha}{2}\cdot \min \left(\frac{1}{2}, \frac{1}{2}\right)- \frac{\alpha}{2}\cdot\max \left(\frac{1}{2} + \frac{1}{2}- 1, 0\right)\right] \\
& = 4C \left(\frac{1}{2},\frac{1}{2}\right) + 4.\frac{\alpha}{2}\left(\frac{1}{2} - 0\right) -1 \\
& = 4C\left(\frac{1}{2},\frac{1}{2}\right)- 1 + \alpha \\
&= \beta(C) + \alpha.
\end{align*}

By the definition of Gini's gamma, we have
\begin{align*}
\gamma(C_{\alpha}) &= 4\int_0^1\Big[C_{\alpha}(u,u) + C_{\alpha}(u, 1-u) - u\Big]du, \quad \textrm{where} \\
C_{\alpha}(u,u)&= C(u,u) + \frac{\alpha}{2}\Big[\min(u,u) - \max(u+u-1,0)\Big] \\
&= C(u,u) + \frac{\alpha}{2}\Big[u - (2u - 1)I_{u \geq \frac{1}{2}}\Big], \quad \textrm{and}, \\
\int_0^1 C_{\alpha}(u,u)du &=  \int_0^1 C(u,u)du + \frac{\alpha}{2} \int_0^1 \Big[u - (2u - 1)I_{u \geq \frac{1}{2}}\Big]du \\
& = \int_0^1 C(u,u)du + \frac{\alpha}{2}\Big(\frac{1}{2}- \frac{1}{4}\Big)= \int_0^1 C(u,u)du + \frac{\alpha}{8}.
\end{align*}
and
\begin{align*}
\int_0^1 C_{\alpha}(u,1-u)du &=  \int_0^1 C(u,1-u)du + \frac{\alpha}{2} \int_0^1\Big[\min(u,1-u) - \max(u+1\\
&\quad -u-1, 0)\Big]du \\
&= \int_0^1 C(u,1-u)du + \frac{\alpha}{2}\Big[\int_0^{\frac{1}{2}} u du + \int_{\frac{1}{2}}^{1}(1-u)du\Big]\\
& = \int_0^1 C(u,1-u)du + \frac{\alpha}{2}\left(\frac{1}{8} + \frac{1}{8}\right) = \int_0^1 C(u,1-u)du + \frac{\alpha}{8}.
\end{align*}
Hence,
\begin{align*}
\gamma(C_{\alpha})&= 4\Big[\int_0^1 C(u,u)du + \int_0^1 C(u,1-u)du + \frac{\alpha}{8} - \int_0^1 udu\Big] \\
&= 4\int_0^1\Big[C(u,u) + C(u,1-u) -u \Big]du + 4\Big(\frac{\alpha}{8}+ \frac{\alpha}{8}\Big)\\
&= \gamma(C) + \alpha.
\end{align*}

By the definition of Kendall's tau, we have
\begin{align*}
\tau(C_{\alpha})&= 4\int_0^1\int_0^1 C_{\alpha}(u,v)\frac{\partial ^2 C_{\alpha}}{\partial u \partial v} dudv - 1 \\
&= 4\int_0^1\int_0^1 \left[C(u,v)+ \frac{\alpha}{2}\Big(M(u,v) - W(u,v)\Big)\right]\frac{\partial ^2} {\partial u \partial v}\Big[C(u,v) \\
&\quad + \frac{\alpha}{2}\Big(M(u,v) - W(u,v)\Big)\Big]dudv - 1
\end{align*}
Since
\begin{equation}
  M(u,v)- W(u,v) =
    \begin{cases}
      u, & \text{if $u \leq v$, $u + v \leq 1$}\\
      1-v, & \text{if $u \leq v$, $u + v > 1$}\\
      v,  &\text{if $u > v$, $u + v \leq 1$}\\
      1-u, & \text{if $u > v$, $u + v > 1$},\\
    \end{cases}
\end{equation}
$$ \frac{\partial ^2} {\partial u \partial v}\Big[M(u,v) - W(u,v)\Big]= 0. $$
We can get,
\begin{align*}
\tau(C_{\alpha})&= 4\int_0^1\int_0^1\Big[C(u,v)+ \frac{\alpha}{2}\Big(M(u,v) - W(u,v)\Big)\Big]\frac{\partial ^2 C} {\partial u \partial v} dudv - 1 \\
&=4\int_0^1\int_0^1 C \frac{\partial ^2 C} {\partial u \partial v} dudv - 1 + 4\int_0^1\int_0^1 \frac{\alpha}{2}\Big[M(u,v) - W(u,v)\Big]\frac{\partial ^2 C} {\partial u \partial v} dudv \\
& = \tau(C)+ 2\alpha \int_0^1\int_0^1\Big[M(u,v)- W(u,v)\Big]\frac{\partial ^2 C}{\partial u \partial v} dudv. \\
\end{align*}

\textbf{Proof of theorem 2.6:}\\

We have,
\begin{align*}
\lambda_{L}^{C_{\alpha}}&= \lim_{u \to 0}\frac{C_{\alpha}(u,u)}{u} \\
&=\lim_{u \to 0}\left[\frac{C(u,u) +  \frac{\alpha}{2}\Big(\min(u,u) - \max(u+u-1,0)\Big)}{u}\right] \\
&=\lim_{u \to 0}\left[ \frac{C(u,u) + \frac{\alpha}{2}(u-0)}{u}\right]= \lim_{u \to 0} \frac{C(u,u)}{u} + \lim_{u \to 0} \frac{\alpha}{2}\\
&= \lambda_L^C + \frac{\alpha}{2}.\\
\end{align*}
and
\begin{align*}
\lambda_{U}^{C_{\alpha}}&= \left[\lim_{u \to 1} \frac{1 - 2u + C_{\alpha}(u,u)}{1-u}\right]\\
&= \lim_{u \to 1}\left[\frac{1 - 2u + C(u,u)+ \frac{\alpha}{2}(\min(u,u)-\max(u+u-1, 0))}{1-u}\right] \\
&= \lim_{u \to 1}\left[ \frac{1 - 2u + C(u,u)}{1-u}\right] + \lim_{u \to 1}\left[\frac{\alpha(u - (2u - 1))}{2(1-u)}\right] \\
& = \lambda_{U}^{C} + \lim_{u \to 1} \frac{\alpha}{2}= \lambda_{U}^{C} + \frac{\alpha}{2}.
\end{align*}
\bibliographystyle{ieeetran}
\bibliography{ref}

\end{document}